\documentclass[11pt]{article}

\usepackage{amsmath,amssymb,amsthm,mathtools,mathrsfs}
\usepackage{microtype}
\usepackage[colorlinks=true,linkcolor=blue,citecolor=blue,urlcolor=blue]{hyperref}
\usepackage[nameinlink,noabbrev]{cleveref}
\usepackage{enumitem}

\newcommand{\OmegaD}{\Omega}
\newcommand{\R}{\mathbb R}

\newcommand{\E}{\mathbb E}
\newcommand{\Prob}{\mathbb P}
\newcommand{\Var}{\operatorname{Var}}
\newcommand{\Cov}{\operatorname{Cov}}
\newcommand{\Law}{\operatorname{Law}}
\newcommand{\supp}{\operatorname{supp}}

\newcommand{\la}{\langle}
\newcommand{\ra}{\rangle}
\newcommand{\tr}{\operatorname{tr}}

\theoremstyle{plain}
\newtheorem{theorem}{Theorem}[section]
\newtheorem{proposition}[theorem]{Proposition}
\newtheorem{lemma}[theorem]{Lemma}
\newtheorem{corollary}[theorem]{Corollary}

\theoremstyle{definition}

\theoremstyle{remark}
\newtheorem{remark}[theorem]{Remark}

\title{A note on several inverse problems with generally random coefficients}
\author{C\u{a}t\u{a}lin I. C\^{a}rstea\thanks{Department of Applied Mathematics, National Yang Ming Chiao Tung University, Hsinchu 30050, Taiwan. Email: \href{mailto:catalin.carstea@gmail.com}{\texttt{catalin.carstea@gmail.com}}.}}
\date{}

\begin{document}
\maketitle

\begin{abstract}
We consider several inverse problems for elliptic equations whose coefficients are random, without imposing a special probabilistic structure on the randomness.  The main body treats the Schr\"odinger equation.  We compare what can be recovered from the full law of the Dirichlet-to-Neumann map, from its expectation, from finitely many joint moments of its boundary bilinear form, and from the averaged interior Green's operator.  We obtain both positive and negative results.  That the full law of the Dirichlet-to-Neumann map determines the law of the random potential is almost trivial.  However, the expected Dirichlet-to-Neumann map and, more generally, any fixed finite hierarchy of its boundary moments need not determine even the mean potential.  In contrast, the averaged Schr\"odinger Green's operator determines the pointwise mean and variance of the potential.  In a two-atom model it determines all pointwise moments of the two-point law.    The appendices contain the corresponding results for the conductivity equation.
\end{abstract}

\section{Introduction}

This note is concerned with inverse problems for elliptic equations whose coefficients are random.  The randomness is mostly not assumed to have a special form: it is not required to be Gaussian, perturbative, microlocally isotropic, or supported on inclusions.  We ask what can be recovered in this generally random setting from several types of data, from knowing the full probability distribution of, for example, the boundary data to knowing just its expectation.

The main body of the paper treats the Schr\"odinger equation on a bounded domain $\OmegaD\subset\R^n$.  For a deterministic potential $q$, let $u_f$ solve
\[
  (-\Delta+q)u_f=0\quad\text{in }\OmegaD,
  \qquad u_f|_{\partial\OmegaD}=f,
\]
and let $\Lambda_q f=\partial_\nu u_f|_{\partial\OmegaD}$ be the Dirichlet-to-Neumann map.  We also write
\[
  G_q=(-\Delta_D+q)^{-1}
\]
for the Dirichlet Green's operator.  Thus $\Lambda_q$ is the boundary map in the classical Calder\'on-type inverse problem for the Schr\"odinger operator, while $G_q$ is an interior source-to-solution operator.  If $q=q(\omega,x)$ is random, the following objects carry different amounts of information:
\[
  \Law(\Lambda_q),\qquad
  \E\Lambda_q,
  \qquad
  \text{finitely many joint moments of }\la\Lambda_qf,g\ra,
  \qquad
  \E G_q.
\]
 The purpose of the paper is to illustrate their differences.

\subsubsection*{Results}

The first result is an obstruction for averaged Dirichlet-to-Neumann maps.  The expected Dirichlet-to-Neumann map of a genuinely random potential can be indistinguishable from the Dirichlet-to-Neumann map of a deterministic potential which is not the mean potential.  More precisely, we will construct smooth nonnegative radial potentials $q,q_1,q_2$ in the unit ball of $\R^3$ such that
\begin{equation}\label{eq:intro-DN-half}
  \Lambda_q=\frac12\Lambda_{q_1}+\frac12\Lambda_{q_2}.
\end{equation}
If $Q$ takes the values $q_1$ and $q_2$ with probability $1/2$, then $\E\Lambda_Q=\Lambda_q$, but $q\ne \E Q$.  It follows then that it is not possible for $\E\Lambda_Q$ to determine $\E q$, as it does in the deterministic case.

This obstruction is not confined to first moments.  For every fixed integer $M$, we can construct finitely valued random potentials $q_A$ and $q_B$ for which all joint moments of the scalar quantities $\la\Lambda_qf,g\ra$ up to order $M$ agree, for all boundary functions appearing in the moments, but
\[
  \E q_A\ne \E q_B.
\]
In particular, even knowing both the expectation and the covariance form of the random Dirichlet-to-Neumann map does not determine the mean potential in the general class considered here.

The second result shows that this loss of information is caused by averaging and finite moment truncation, not by a failure of deterministic uniqueness.  If the full law of the random Dirichlet-to-Neumann map is known, then deterministic uniqueness lifts directly to recovery of probability laws.  We formulate this by recording the joint law of countably many matrix entries $\la\Lambda_qf_i,f_j\ra$ with respect to a dense sequence of boundary functions.  On any deterministic coefficient class where $q\mapsto\Lambda_q$ is injective and the corresponding coordinate map is Borel, this joint law determines $\Law(q)$.  In particular, classical deterministic uniqueness theorems for the Schr\"odinger inverse problem, such as the theorem of Sylvester and Uhlmann in dimensions $n\ge3$ \cite{SylvesterUhlmann1987}, immediately imply full-law recovery on their coefficient classes.

The third result concerns the averaged Green's operator (the ``source-to-solution'' map).  The full interior symbol of $G_q$ contains local information about $q$, and the corresponding symbol expansion for $\E G_q$ retains local moments of the potential in a way that the expected Dirichlet-to-Neumann map need not.  We show that the full interior symbol of $\E G_q$ determines $\E q$ and $\Var(q)$ pointwise.  For a two-valued random potential, the same triangular symbolic structure determines all pointwise moments of the two-point law.  We also treat a finite-dimensional model
\[
  q(\omega,x)=q_0(x)+\sum_{j=1}^d X_j(\omega)V_j(x),
\]
where the profiles $q_0,V_1,\ldots,V_d$ are known.  Under a natural algebraic nondegeneracy condition on the profiles, the averaged Green's operator determines all mixed moments, and hence the law, of the compactly supported random vector $X$.

The two-atom statements also have a deterministic interpretation.  They are not only examples about random coefficients, but also statements about the geometry of the coefficient-to-measurement correspondences.  The boundary counterexample gives a nontrivial affine identity in the range of $q\mapsto\Lambda_q$: a convex combination of two deterministic DN maps is again a deterministic DN map, although not the one corresponding to the same convex combination of the potentials.  This is a range-characterization issue, not just a probabilistic one.  By contrast, the two-atom rigidity result for averaged Green's operators says that the analogous affine identity for $q\mapsto G_q$ is trivial.  Thus the comparison between boundary maps and interior Green's operators may also be read as a comparison between two deterministic nonlinear ranges.  Deterministic range and convexity questions for Calder\'on-type coefficient maps have also appeared in related forms, for example in the convexity result for Schr\"odinger DN maps near the zero potential in \cite{CarsteaFeizmohammadiOksanen2023} and in finite-dimensional monotonicity-based formulations of the Calder\'on problem \cite{Harrach2023}.

The appendices record the corresponding conductivity statements.  The boundary-map counterexamples have conductivity analogues, obtained from the radial Schr\"odinger construction by the Liouville transform.  For conductivity Green's operators the principal symbol sees $\E(\gamma^{-1})$ rather than $\E\gamma$, and the first nontrivial correction determines a weighted covariance matrix involving logarithmic gradients of $\gamma^{-1}$.

\subsubsection*{Comparison to existing work}

A useful comparison point is the literature on inverse scattering for random potentials.  Lassas, P\"aiv\"arinta and Saksman studied inverse problems for random potentials in the Schr\"odinger equation in \cite{LassasPaivarintaSaksman2004,LassasPaivarintaSaksman2008}.  In the two-dimensional random scattering result of \cite{LassasPaivarintaSaksman2008}, the potential is Gaussian and its covariance operator is a classical pseudodifferential operator; from one realization of the backscattered field, the principal symbol of the covariance operator is recovered.  Caro, Helin and Lassas studied the corresponding plane-wave backscattering problem in dimension $n\ge 2$ for Gaussian microlocally isotropic random fields.  In general dimensions their result recovers the local strength from the single-scattering, or first Born, contribution, while in the case $n=3$ and covariance order $-3$ they prove the corresponding full nonlinear inverse backscattering result \cite{CaroHelinLassas2016}.  Related random-potential scattering results include recent stability, one-dimensional Helmholtz, elastic, biharmonic, and polyharmonic variants \cite{WangXuZhao2025StabilityPotential,WangXuZhao2025OneDim,LiLiWang2022ElasticPotential,LiWang2024BiharmonicRandomPotential,LiLiWangYang2025PolyharmonicPotential}.  Ma's survey gives a broader account of single-realization recovery results for random Schr\"odinger systems \cite{Ma2021}.

These papers do not assume an arbitrary probability law of the potential, and do not recover the pointwise mean of a coefficient.  What they recover is the local strength of a microlocally isotropic Gaussian random field, equivalently the principal symbol of the covariance operator.  Thus the assumed microlocal covariance structure is part of the identifiable object.  If one leaves this class, or if one asks for lower-order covariance information, a smooth covariance kernel, the mean, or the full probability law, the quoted theorems do not by themselves give such a recovery result.  At the level of leading singularities, this is analogous to the deterministic recovery-of-singularities program, where singular information about a potential is recovered from backscattering data; see Greenleaf--Uhlmann \cite{GreenleafUhlmann1993} and Reyes--Ruiz \cite{ReyesRuiz2012}.

A related scattering literature concerns random sources, or simultaneous source and potential questions.  For Schr\"odinger equations with unknown source and potential terms, Li, Liu and Ma proved uniqueness results for the variance of the source, the potential, and the expectation of the source from far-field measurements; later work treats the case where both the potential and source are random, and the case of a deterministic unknown potential with a microlocally isotropic Gaussian random source \cite{LiLiuMa2019,LiLiuMa2021,LiuMa2023}.  Random source scattering has also been studied for acoustic, elastic, attenuated Helmholtz, and related wave models, where the recovered quantities are typically statistical properties or principal symbols associated with the source covariance \cite{BaoChenLi2016,LiChenLi2018,LiWang2021HelmholtzAttenuation,LiHelinLi2020,LiLi2019ElasticSource}.  Passive imaging and correlation-based inverse problems form another adjacent direction: one uses fields generated by random or pseudorandom sources, or their long-time correlations, to recover metric, travel-time, Green's function, or covariance information \cite{HelinLassasOksanen2012,HelinLassasOksanen2014,HelinLassasOksanenSaksala2018,DeHoopSolna2009,BardosGarnierPapanicolaou2008}.

Another nearby line of work concerns electrical impedance tomography with stochastic conductivities.  Barth, Harrach, Hyv\"onen and Mustonen considered stochastic inclusions in a deterministic background and proved that the support of the inclusion can be detected from the mean Neumann-to-Dirichlet map by applying the factorization method or the monotonicity method, provided the random inclusion has sufficiently large contrast in the sense of expectation \cite{BarthHarrachHyvonenMustonen2017}.  That result assumes an inclusion geometry and a sign/contrast condition, and its conclusion is detection of the inclusion.  Stochastic homogenization and probabilistic interpretations of electrical impedance tomography have also been studied by Simon and by Piiroinen--Simon \cite{Simon2015,PiiroinenSimon2016,PiiroinenSimon2017}; these works are closer to stochastic forward modelling and probabilistic reformulations of Calder\'on's problem than to recovery of an arbitrary coefficient law from the law or moments of a boundary map.

There is also a distinct theory of inverse problems for stochastic partial differential equations.  In that setting the equation itself is stochastic, typically parabolic or hyperbolic, and the analysis often uses stochastic Carleman estimates to recover sources, coefficients, or states from observations of the random evolution.  The survey of L\"u and Zhang describes this program and emphasizes inverse problems that are genuinely stochastic and cannot simply be reduced to deterministic inverse problems \cite{LuZhang2023}. 

\subsubsection*{Organization of the paper}

 \Cref{sec:Schrodinger-setup} fixes the notation for Schr\"odinger DN maps, Green's operators, random potentials, and covariance forms.  \Cref{sec:schrodinger-DN-obstructions} proves the radial counterexamples for expected DN maps and finite DN moment hierarchies, and records a Jensen-type inequality for expected DN maps.  \Cref{sec:law-DN-map} proves the full-law recovery statement.  \Cref{sec:Green-potential} studies averaged Schr\"odinger Green's operators, including the two-atom and finite-dimensional models.  The appendices contain the conductivity analogues.

\section{Schr\"odinger DN maps, Green's operators, and random potentials}\label{sec:Schrodinger-setup}

We use the following conventions throughout the Schr\"odinger part of the paper, unless a statement explicitly gives a different coefficient class.  The domain $\OmegaD\subset\R^n$, $n\ge2$, is bounded with smooth boundary.  The default deterministic class is
\[
  \mathcal Q_+^\infty(\OmegaD)
  =\{q\in C^\infty(\overline\OmegaD;\R): q\ge0\}.
\]
For $q\in \mathcal Q_+^\infty(\OmegaD)$ the Dirichlet realization $-\Delta_D+q$ is strictly positive, hence
\[
  G_q=(-\Delta_D+q)^{-1}
\]
is well defined.  More generally, the same notation is used for real smooth potentials for which $0$ is not a Dirichlet eigenvalue, but this extra generality will not be needed in the counterexamples below.  For a boundary value $f$, let $u_f$ solve
\[
  (-\Delta+q)u_f=0\quad\text{in }\OmegaD,
  \qquad u_f|_{\partial\OmegaD}=f.
\]
The Schr\"odinger Dirichlet-to-Neumann map is
\[
  \Lambda_qf=\partial_\nu u_f|_{\partial\OmegaD}.
\]
We regard $\Lambda_q$ as the corresponding symmetric bilinear form on the real trace space $H^{1/2}(\partial\OmegaD)$, or equivalently as a bounded operator $H^{1/2}(\partial\OmegaD)\to H^{-1/2}(\partial\OmegaD)$.

A random potential means a strongly measurable map from a probability space $(\mathcal X,\mathcal F,\Prob)$ into the indicated coefficient class.  Expectations of operators are weak operator expectations.  Thus, if $T_\omega:X\to X^*$ is a random family of bounded operators, we write $\overline T=\E T_\omega$ provided the scalar functions $\omega\mapsto\la T_\omega f,g\ra$ belong to $L^1(\mathcal X)$ for all $f,g\in X$, and then define
\[
  \la \overline T f,g\ra=\E\la T_\omega f,g\ra .
\]
For DN maps we take $X=H^{1/2}(\partial\OmegaD)$, with the real bilinear or complex sesquilinear duality according to the scalar field.  For Green's operators we use the analogous weak expectation on $L^2(\OmegaD)$, or after localization by compactly supported cutoffs in the interior.  Higher probabilistic moments of matrix coefficients are used only when the corresponding scalar random variables have the required finite moments.  In particular, the finite-valued random potentials used in the counterexamples below satisfy this condition automatically.  When full interior symbols of averaged Green's operators are used, we impose the stated uniform $C^N$ bounds so that the usual interior parametrix construction may be averaged term by term.

For $q_1,q_2\in \mathcal Q_+^\infty(\OmegaD)$, Alessandrini's identity gives
\begin{equation}\label{eq:Alessandrini-q}
  \la(\Lambda_{q_1}-\Lambda_{q_2})f,g\ra
  =\int_\OmegaD (q_1-q_2)u_f^{(1)}u_g^{(2)}\,dx,
\end{equation}
where $u_f^{(j)}$ solves the equation with potential $q_j$.

For a random DN map we use the covariance of the associated random bilinear form whenever the scalar matrix coefficients have finite second moments.  More precisely, if
\[
  \omega\mapsto \la \Lambda_{q(\omega)}f,g\ra
  \quad\text{belongs to }L^2(\mathcal X)
\]
for the boundary values under consideration, set
\begin{equation}\label{eq:DN-covariance-definition}
\begin{split}
  \Cov(\Lambda_q)(f,g;h,k)
  :=\E\Big[&\la(\Lambda_q-\E\Lambda_q)f,g\ra \\
  &\times \la(\Lambda_q-\E\Lambda_q)h,k\ra\Big].
\end{split}
\end{equation}
For complex boundary values one inserts a complex conjugate in the second factor.

\section{Schr\"odinger DN maps: obstructions and order constraints}
\label{sec:schrodinger-DN-obstructions}

\subsection{Mean Schr\"odinger DN maps can coincide with deterministic DN maps}
\label{sec:DN-counterexamples}

We first record the obstruction for expected DN maps.  The examples are radial and explicit.  Although elementary, they show that $\E\Lambda_q$ alone cannot support a general recovery theorem for the mean coefficient.

\subsubsection{A radial Schr\"odinger counterexample}

We first isolate the elementary radial construction behind the examples.  The following notation will also be used below.

\begin{lemma}\label{lem:radial-affine-family}
Let $n\ge2$, let $\OmegaD=B(0,1)\subset\R^n$, and fix $\lambda>0$ and an integer $N\ge1$ with $\lambda>2N$.  Put
\begin{equation}\label{eq:QN-family}
  d_N=\frac{\lambda-2N}{\lambda+2N},
  \qquad
  Q_N(r)=\frac{8N^2d_Nr^{2N-2}}{(1-d_Nr^{2N})^2}.
\end{equation}
Then $Q_N$ is smooth and nonnegative on $\overline B(0,1)$; in fact $Q_N$ is real analytic in a neighborhood of $\overline B(0,1)$.  If $Y_{\ell}$ is a spherical harmonic of degree $\ell$ and
\[
  (-\Delta+Q_N)u=0,\qquad u|_{\partial B}=Y_{\ell},
\]
then $\Lambda_{Q_N}Y_{\ell}=\mu_{\ell,N}^{(n)}Y_{\ell}$, where
\begin{equation}\label{eq:mu-N-formula-general}
  \mu_{\ell,N}^{(n)}
  =\ell+\frac{\lambda^2-4N^2}{2(\lambda+2\ell+n-2)}.
\end{equation}
In particular, for fixed $\lambda$ the DN eigenvalues are affine functions of $N^2$.
\end{lemma}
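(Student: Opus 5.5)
The plan is to write the solution explicitly through a Liouville/Riccati-type substitution. The potential $Q_N$ has the form $-\Delta w/w$ for a simple radial positive function $w$. Then $u = w\cdot v$ reduces the equation to a divergence-form equation for $v$, which should have explicit solutions.

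\textbf{Positivity and smoothness.} Since $\lambda>2N>0$, we have $0<d_N<1$. Hence $1-d_Nr^{2N}\ge 1-d_N>0$ on $\overline B(0,1)$, and it stays positive for $r<d_N^{-1/(2N)}$, a radius exceeding $1$. The function $r^{2N-2}=|x|^{2N-2}$ is a polynomial in $x$. So $Q_N$ is a rational function of $|x|^2$ with nonvanishing denominator near $\overline B$. This gives real analyticity and nonnegativity.

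\textbf{Identifying $Q_N$ as $\Delta w/w$.} The form $8N^2 d r^{2N-2}/(1-dr^{2N})^2$ is reminiscent of $-\Delta\log$ of a conformal factor. In 2D, $\Delta\log(1-dr^{2N})^{-2}$ is exactly such an expression. In general dimension, I would look for radial $\phi$ with
\[
  Q_N=\Delta\phi+|\nabla\phi|^2 \quad\text{for } \phi=\log w .
\]
Candidates are $\phi=-\log(1-dr^{2N})$ combined with a harmonic-type factor, or a Riccati ansatz $\phi'=a r^{2N-1}/(1-dr^{2N})+b/r$. Rather than guess $w$, I would solve for the radial ODE solutions directly. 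For $u=\rho(r)Y_\ell$ the equation is
\[
  \rho''+\tfrac{n-1}{r}\rho'-\tfrac{\ell(\ell+n-2)}{r^2}\rho-Q_N\rho=0 .
\]
Take the ansatz
\[
  \rho=r^\ell\,\frac{A+Br^{2N}}{1-dr^{2N}},
\]
which is regular at $0$. Substitute and demand that the residual vanish. This gives a linear condition on $(A,B)$ that should fix $B/A$ in terms of $\ell,n,N,d$. The expected relation is $B/A=-d\,(\lambda'+\ldots)/(\ldots)$, with the combination $\lambda+2\ell+n-2$ appearing after $d$ is expressed through $\lambda$. This is the main computational obstacle. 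Clearing the denominator $(1-dr^{2N})^3$ gives a polynomial identity in $r^{2N}$ whose coefficients of $r^{\ell-2}\cdot r^{2Nk}$, $k=0,1,2$, must vanish. The $k=0$ and $k=2$ coefficients should hold automatically given the $8N^2d$ normalization. The $k=1$ coefficient determines $B/A$. If this two-parameter ansatz fails, I would enlarge it to a numerator $A+Br^{2N}+Cr^{4N}$.

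\textbf{The DN eigenvalue.} With $\rho$ found, normalize $\rho(1)=1$. Then $\Lambda_{Q_N}Y_\ell=\rho'(1)Y_\ell$, since the outward normal derivative on the unit sphere is $\partial_r$. Uniqueness of the Dirichlet solution holds because $Q_N\ge0$. Compute the logarithmic derivative:
\[
  \frac{\rho'(1)}{\rho(1)}=\ell+\frac{2NB}{A+B}+\frac{2Nd}{1-d}.
\]
Now substitute $d=(\lambda-2N)/(\lambda+2N)$, so that $1-d=4N/(\lambda+2N)$ and $2Nd/(1-d)=(\lambda-2N)/2$. Inserting the value of $B/A$ from the previous step, simplify to
\[
  \ell+\frac{\lambda^2-4N^2}{2(\lambda+2\ell+n-2)} .
\]
This is a final algebraic check. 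In this expression $N$ appears only through $N^2$, which gives the affine-in-$N^2$ claim immediately.
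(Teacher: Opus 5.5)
Your proposal is correct and is essentially the paper's proof: the paper substitutes $r=e^{-x}$, $R_\ell=e^{\alpha x}y$ and takes $y=e^{-\nu x}(1-c_Ne^{-2Nx})/(1-d_Ne^{-2Nx})$, which is exactly your ansatz $\rho=r^\ell(A+Br^{2N})/(1-d_Nr^{2N})$ with $B/A=-c_N=-d_N(\nu-N)/(\nu+N)$, $\nu=\ell+\frac{n-2}{2}$, and then does the same logarithmic-derivative simplification you describe. One detail of your prediction is slightly off: only the $k=0$ coefficient vanishes identically, while the $k=2$ coefficient is not automatic but equals $-d_N$ times the $k=1$ coefficient (which is proportional to $B(N+\nu)-d_NA(N-\nu)$), and it is the normalization $8N^2d_N$ that makes these two conditions compatible.
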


\begin{proof}
Since $0<d_N<1$, the denominator in \eqref{eq:QN-family} is nonzero for $0\le r\le1$.  Moreover $Q_N$ is a rational function of $|x|^2$, namely
\[
  Q_N(x)=\frac{8N^2d_N(|x|^2)^{N-1}}{(1-d_N(|x|^2)^N)^2},
\]
so it is real analytic near $\overline B(0,1)$ and nonnegative.

The radial solution has the form $u(r,\omega)=R_\ell(r)Y_\ell(\omega)$, where
\[
 -R_\ell''-\frac{n-1}{r}R_\ell'
 +\frac{\ell(\ell+n-2)}{r^2}R_\ell+Q_N(r)R_\ell=0,
 \qquad R_\ell(1)=1,
\]
and $R_\ell(r)=O(r^\ell)$ at $r=0$.  Set $r=e^{-x}$,
\[
  \alpha=\frac{n-2}{2},
  \qquad
  \nu=\ell+\alpha,
  \qquad
  R_\ell(e^{-x})=e^{\alpha x}y(x).
\]
Then $y$ solves
\[
  -y''+V_Ny=-\nu^2y,
  \qquad
  V_N(x)=e^{-2x}Q_N(e^{-x})
  =\frac{8N^2d_Ne^{-2Nx}}{(1-d_Ne^{-2Nx})^2}.
\]
The regular solution is proportional to
\[
  y(x)=e^{-\nu x}
  \frac{1-c_Ne^{-2Nx}}{1-d_Ne^{-2Nx}},
  \qquad
  c_N=d_N\frac{\nu-N}{\nu+N}.
\]
Indeed, a direct substitution verifies the differential equation, and as $x\to\infty$ one has $y(x)\sim e^{-\nu x}$; therefore $R_\ell(r)\sim r^\ell$ as $r\to0$.  The boundary normalization $R_\ell(1)=1$ only multiplies this solution by a constant and does not affect the logarithmic derivative at $r=1$.  Moreover,
\[
  \frac{y'(0)}{y(0)}
  =-\nu+\frac{2Nc_N}{1-c_N}-\frac{2Nd_N}{1-d_N}
  =-\nu+\frac{(\lambda-2N)(\nu-N)}{\lambda+2\nu}
      -\frac{\lambda-2N}{2}.
\]
Since $R_\ell'(1)=-y'(0)/y(0)-\alpha$, this gives
\[
  R_\ell'(1)
  =\ell+\frac{\lambda^2-4N^2}{2(\lambda+2\nu)}
  =\ell+\frac{\lambda^2-4N^2}{2(\lambda+2\ell+n-2)}.
\]
This is \eqref{eq:mu-N-formula-general}.
\end{proof}

\begin{proposition}\label{prop:radial-schrodinger-counterexample}
Let $\OmegaD=B(0,1)\subset\R^3$.  Define
\begin{align*}
q_1(r)&=\frac{56/9}{\left(1-\frac79 r^2\right)^2},\\[3pt]
q(r)&=\frac{600}{13}\,\frac{r^8}{\left(1-\frac3{13}r^{10}\right)^2},\\[3pt]
q_2(r)&=\frac{392}{15}\,\frac{r^{12}}{\left(1-\frac1{15}r^{14}\right)^2}.
\end{align*}
Then $q,q_1,q_2$ are nonnegative real analytic functions in a neighborhood of $\overline{B(0,1)}$, they are pairwise distinct, and
\begin{equation}\label{eq:schrodinger-half-counterexample}
  \Lambda_q=\frac12\Lambda_{q_1}+\frac12\Lambda_{q_2}.
\end{equation}
\end{proposition}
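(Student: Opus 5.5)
The plan is to recognize all three potentials as members of the radial family of \Cref{lem:radial-affine-family}, with $n=3$ and one common value of $\lambda$, and then use the fact that the DN eigenvalues depend affinely on $N^2$.

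\emph{Step 1: identify the parameters.} Take $\lambda=16$ and compute $d_N=(16-2N)/(16+2N)$.
\begin{itemize}
\item For $N=1$ we get $d_1=14/18=7/9$ and $8N^2d_1=56/9$. The powers are $r^{2N-2}=r^0$ and $r^{2N}=r^2$, so $Q_1=q_1$.
\item For $N=5$ we get $d_5=6/26=3/13$ and $8\cdot25\cdot 3/13=600/13$. The powers are $r^{8}$ and $r^{10}$, so $Q_5=q$.
\item For $N=7$ we get $d_7=2/30=1/15$ and $8\cdot 49/15=392/15$. The powers are $r^{12}$ and $r^{14}$, so $Q_7=q_2$.
\end{itemize}
Since $\lambda=16>2N$ for $N=1,5,7$, the lemma applies. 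It gives nonnegativity and real analyticity near $\overline{B(0,1)}$ for all three potentials.

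\emph{Step 2: compare eigenvalues.} By \eqref{eq:mu-N-formula-general} with $n=3$,
\[
  \mu^{(3)}_{\ell,N}=\ell+\frac{256-4N^2}{2(17+2\ell)} .
\]
For fixed $\ell$ this is affine in $N^2$. The three values of $N^2$ are $1$, $25$, $49$, and $25=\tfrac12(1+49)$. Hence
\[
  \mu_{\ell,5}^{(3)}=\tfrac12\mu_{\ell,1}^{(3)}+\tfrac12\mu_{\ell,7}^{(3)}
  \quad\text{for every }\ell\ge0 .
\]
So the two sides of \eqref{eq:schrodinger-half-counterexample} agree on every spherical harmonic $Y_\ell$.

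\emph{Step 3: extend to all boundary data.} Finite linear combinations of spherical harmonics are dense in $H^{1/2}(\partial B)$. Both sides of \eqref{eq:schrodinger-half-counterexample} are bounded operators $H^{1/2}\to H^{-1/2}$, equivalently bounded bilinear forms. Since they agree on a dense subspace, they agree everywhere. This extension step is the only point needing any care, and it is routine.

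\emph{Step 4: distinctness.} The three potentials vanish to different orders at the origin: $q_1(0)=56/9>0$, $q$ vanishes exactly to order $r^8$, and $q_2$ vanishes exactly to order $r^{12}$. Hence they are pairwise distinct.

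There is no real obstacle. The only nontrivial work is spotting the common value $\lambda=16$ and the arithmetic coincidence $25=(1+49)/2$.
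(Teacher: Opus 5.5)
Your proposal is correct and matches the paper's proof step for step. You identify $q_1,q,q_2$ as $Q_1,Q_5,Q_7$ with $n=3$ and $\lambda=16$, use that $\mu^{(3)}_{\ell,N}$ is affine in $N^2$ together with $25=\tfrac12(1+49)$, diagonalize in spherical harmonics, and distinguish the potentials by their behavior at the origin. Your Step 3 (density plus boundedness) simply spells out what the paper compresses into ``the DN maps are diagonal in the spherical harmonic basis.''
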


\begin{proof}
Use Lemma~\ref{lem:radial-affine-family} with $n=3$ and $\lambda=16$.  The choices $N=1,5,7$ give the three displayed potentials $q_1=Q_1$, $q=Q_5$, and $q_2=Q_7$.  Since
\[
  5^2=\frac12\cdot1^2+\frac12\cdot7^2,
\]
formula \eqref{eq:mu-N-formula-general} gives
\[
  \mu_{\ell,5}^{(3)}=\frac12\mu_{\ell,1}^{(3)}+\frac12\mu_{\ell,7}^{(3)}
  \qquad\text{for every }\ell\ge0.
\]
The three potentials are pairwise distinct: $Q_1(0)>0$, whereas $Q_5$ and $Q_7$ vanish at the origin to different orders.  The DN maps are diagonal in the spherical harmonic basis, hence
\[
  \Lambda_{Q_5}=\frac12\Lambda_{Q_1}+\frac12\Lambda_{Q_7}.
\]
This is \eqref{eq:schrodinger-half-counterexample}.
\end{proof}

\begin{corollary}\label{cor:random-potential-mean-DN-deterministic}
There is a two-valued random potential $Q$ and a deterministic potential $q_*$ such that
$
  \E[\Lambda_Q]=\Lambda_{q_*},
$
while
$
  q_*\ne \E Q.
$
\end{corollary}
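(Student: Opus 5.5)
We derive the corollary directly from Proposition~\ref{prop:radial-schrodinger-counterexample}. Take $\OmegaD=B(0,1)\subset\R^3$ and the three potentials $q,q_1,q_2$ defined there. On the probability space $\mathcal X=\{1,2\}$, with $\Prob(\{1\})=\Prob(\{2\})=1/2$, set $Q(j)=q_j$. This is a finitely valued map into $\mathcal Q_+^\infty(\OmegaD)$, so it is trivially strongly measurable. For each fixed $f,g\in H^{1/2}(\partial\OmegaD)$ the scalar function $\omega\mapsto\la\Lambda_{Q(\omega)}f,g\ra$ takes only two finite values, so it lies in $L^1$. The weak expectation is therefore defined and equals
\[
  \la\E\Lambda_Q f,g\ra=\tfrac12\la\Lambda_{q_1}f,g\ra+\tfrac12\la\Lambda_{q_2}f,g\ra .
\]
By \eqref{eq:schrodinger-half-counterexample} this equals $\la\Lambda_q f,g\ra$. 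Setting $q_*=q=Q_5$ gives $\E\Lambda_Q=\Lambda_{q_*}$.

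It remains to show that $q_*\ne\E Q=\tfrac12(q_1+q_2)$. The simplest check is at the origin. We have $q_*(0)=0$, since $Q_5$ contains the factor $r^8$. Also $q_2(0)=0$, while $q_1(0)=56/9>0$. Hence
\[
  \E Q(0)=\tfrac12\cdot\tfrac{56}{9}=\tfrac{28}{9}\neq0=q_*(0).
\]
All functions involved are continuous, so $q_*\ne\E Q$ as functions, and indeed on a neighborhood of $0$.

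There is no real obstacle here. The only points needing care are that the weak expectation of the DN map is well defined, which holds trivially for two-valued laws, and that the discrepancy is detected pointwise. The evaluation at $r=0$ settles the latter without any further computation.
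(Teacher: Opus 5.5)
Your proof is correct and matches the paper's intended argument: take $Q$ equal to $q_1$ or $q_2$ with probability $1/2$ each, set $q_*=q$, apply \eqref{eq:schrodinger-half-counterexample}, and detect $q_*\ne\E Q$ at the origin, where $q(0)=q_2(0)=0<q_1(0)=56/9$. This is the same observation used in the proof of Proposition~\ref{prop:radial-schrodinger-counterexample} and in the conductivity analogue, Corollary~\ref{cor:random-conductivity-mean-DN-deterministic}.
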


We also have the following.

\begin{corollary}\label{cor:barycentric-DN-identities}
Let $n\ge2$ and let $Q_N$ be the family in Lemma~\ref{lem:radial-affine-family}.  Suppose that $N_0,N_1,\dots,N_J$ are positive integers.  Let $p_1,\dots,p_J>0$ satisfy $\sum_{j=1}^Jp_j=1$, and assume that
\[
  N_0^2=\sum_{j=1}^Jp_jN_j^2,
  \qquad
  \lambda>2\max\{N_0,N_1,\dots,N_J\}.
\]
Then, on $B(0,1)\subset\R^n$,
\[
  \Lambda_{Q_{N_0}}=\sum_{j=1}^Jp_j\Lambda_{Q_{N_j}}.
\]
Thus the preceding counterexample is one member of a whole family of affine DN identities.
\end{corollary}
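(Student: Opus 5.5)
The plan is to argue exactly as in the proof of Proposition~\ref{prop:radial-schrodinger-counterexample}, with the single identity $5^2=\tfrac12 1^2+\tfrac12 7^2$ replaced by the general barycentric relation. Since $\lambda>2\max\{N_0,\dots,N_J\}$, Lemma~\ref{lem:radial-affine-family} applies to every $N_j$, $j=0,\dots,J$. Each $Q_{N_j}$ is therefore a smooth nonnegative potential on $\overline B(0,1)$, so each DN map is well defined. Moreover, every spherical harmonic $Y_\ell$ of degree $\ell$ is an eigenfunction of $\Lambda_{Q_{N_j}}$, with eigenvalue $\mu^{(n)}_{\ell,N_j}$ given by \eqref{eq:mu-N-formula-general}.

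The key step is an eigenvalue identity. Write \eqref{eq:mu-N-formula-general} as $\mu^{(n)}_{\ell,N}=a_\ell-b_\ell N^2$, where
\[
a_\ell=\ell+\frac{\lambda^2}{2(\lambda+2\ell+n-2)},\qquad b_\ell=\frac{2}{\lambda+2\ell+n-2}.
\]
The coefficients $a_\ell,b_\ell$ do not depend on $N$. Using $\sum_j p_j=1$ and $N_0^2=\sum_j p_jN_j^2$, we get
\[
\sum_{j=1}^J p_j\mu^{(n)}_{\ell,N_j}=a_\ell-b_\ell\sum_{j=1}^Jp_jN_j^2=\mu^{(n)}_{\ell,N_0}
\]
for every $\ell\ge0$.

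To pass from eigenvalues to operators, use that the spherical harmonics span a dense subspace of $H^{1/2}(\partial B)$, and that all the maps involved are bounded $H^{1/2}\to H^{-1/2}$. The two bounded operators $\Lambda_{Q_{N_0}}$ and $\sum_jp_j\Lambda_{Q_{N_j}}$ agree on every $Y_\ell$, so by linearity they agree on finite linear combinations of spherical harmonics, and by density and continuity they agree on all of $H^{1/2}(\partial B)$. Equivalently, the bilinear forms $\la\cdot,\cdot\ra$ agree on pairs of spherical harmonics, and hence everywhere.

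There is no real obstacle here. The only point requiring care is to check that the hypothesis $\lambda>2N_j$ holds for all indices, so that each $d_{N_j}\in(0,1)$ and the lemma applies uniformly. This is guaranteed by the assumption on $\lambda$.
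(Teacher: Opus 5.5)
Your proof is correct and follows the paper's argument. The paper likewise deduces the identity from the affine dependence of $\mu^{(n)}_{\ell,N}$ on $N^2$ in \eqref{eq:mu-N-formula-general} together with diagonalization in spherical harmonics; your explicit density-and-continuity step just spells out what the paper leaves implicit.
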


\begin{proof}
This follows immediately from the affine dependence on $N^2$ in \eqref{eq:mu-N-formula-general} and diagonalization in spherical harmonics.
\end{proof}

\subsection{No finite hierarchy of Schr\"odinger DN moments determines the mean potential}
\label{sec:finite-DN-moments}

The preceding examples show that $\E\Lambda_q$ does not determine $\E q$.  In fact, no finite hierarchy of probabilistic moments of the DN map determines the mean potential.

\begin{theorem}\label{thm:finite-DN-moments-not-enough}
Let $M\ge1$.  There exist two random potentials $q_A,q_B$ taking finitely many values among nonnegative radial potentials that are real analytic in a neighborhood of $\overline{B(0,1)}\subset\R^3$ such that, for every $1\le r\le M$ and all real boundary values
\[
  f_j,g_j\in H^{1/2}(\partial B(0,1);\R),
\]
one has
\begin{equation}\label{eq:finite-DN-moment-equality}
  \E\prod_{j=1}^r\la\Lambda_{q_A}f_j,g_j\ra
  =
  \E\prod_{j=1}^r\la\Lambda_{q_B}f_j,g_j\ra,
\end{equation}
but
\[
  \E q_A\ne\E q_B.
\]
The same construction gives the corresponding complex sesquilinear identities, including identities in which any prescribed factors are complex conjugated.  All expectations in these identities are finite.
\end{theorem}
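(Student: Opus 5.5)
The plan is to work entirely inside the radial family $Q_N$ of Lemma~\ref{lem:radial-affine-family} with $n=3$. We fix $\lambda$ large, larger than $2N$ for all the integers $N$ used below. In the spherical harmonic basis every $\Lambda_{Q_N}$ is diagonal with eigenvalues
\[
  \mu_{\ell,N}^{(3)}=a_\ell+b_\ell N^2,\qquad a_\ell=\ell+\frac{\lambda^2}{2(\lambda+2\ell+1)},\quad b_\ell=-\frac{2}{\lambda+2\ell+1}.
\]
Hence for real boundary values
\[
  \la\Lambda_{Q_N}f,g\ra=A(f,g)+N^2B(f,g),
\]
where $A$ and $B$ are fixed bilinear forms independent of $N$. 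Both are bounded on $H^{1/2}$, because $|a_\ell|\lesssim 1+\ell$ and $|b_\ell|\lesssim 1$. Consequently, for a random potential $q=Q_{\mathcal N}$ with $\mathcal N$ a finitely valued random integer,
\[
  \prod_{j=1}^r\la\Lambda_qf_j,g_j\ra=\prod_{j=1}^r\bigl(A(f_j,g_j)+\mathcal N^2B(f_j,g_j)\bigr).
\]
This is a polynomial of degree at most $r$ in $\mathcal N^2$, with deterministic coefficients. Its expectation therefore depends on the law of $\mathcal N$ only through the moments $\E\mathcal N^{2k}$ for $0\le k\le r$. The complex sesquilinear versions work the same way: the eigenvalues are real, so conjugating a factor only conjugates $A$ and $B$ applied to conjugated data, and the structure is unchanged. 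Finiteness of all expectations is automatic because only finitely many values occur.

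The task therefore reduces to a finite moment problem. We must find two probability vectors on positive integers, for $\mathcal N_A$ and $\mathcal N_B$, such that
\[
  \E\mathcal N_A^{2k}=\E\mathcal N_B^{2k}\quad\text{for }0\le k\le M,
\]
while $\E Q_{\mathcal N_A}\ne\E Q_{\mathcal N_B}$ as functions. For the moment matching I would use a signed measure $\sigma=\sum_i c_i\delta_{N_i}$ on $M+2$ distinct positive integers whose moments vanish against $1,N^2,\dots,N^{2M}$. Such a nonzero $\sigma$ exists because it is a solution of $M+1$ linear equations in $M+2$ unknowns with a Vandermonde coefficient matrix in the variables $N_i^2$. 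The resulting solution can be taken rational, has all $c_i\ne0$, and satisfies $\sum c_i=0$. We then set
\[
  \mu_A=\sigma^++\tau,\qquad \mu_B=\sigma^-+\tau,
\]
where $\tau$ is an arbitrary fixed positive measure, or zero. Normalizing both to total mass one is consistent because $\sum c_i=0$ forces $\sigma^+$ and $\sigma^-$ to have equal mass. These two laws have equal moments up to order $2M$ in $N$, so \eqref{eq:finite-DN-moment-equality} holds for every $r\le M$.

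It remains to show $\E q_A\ne\E q_B$, which is equivalent to $\sum_i c_iQ_{N_i}\not\equiv0$. I expect this to be the main point requiring care, since a linear relation among the $Q_N$ might conceivably coincide with the moment relation. The plan is to compare orders of vanishing at $r=0$: $Q_N(r)$ behaves like $8N^2d_Nr^{2N-2}$ as $r\to0$, with nonzero leading coefficient because $d_N>0$. Let $N_{\min}$ be the smallest of the distinct integers $N_i$. Among the $Q_{N_i}$, only $Q_{N_{\min}}$ contributes to the coefficient of $r^{2N_{\min}-2}$ in the Taylor expansion, so
\[
  \sum_i c_iQ_{N_i}=c_{\min}\,8N_{\min}^2d_{N_{\min}}\,r^{2N_{\min}-2}+O(r^{2N_{\min}}),
\]
which is nonzero because $c_{\min}\ne0$. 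This is the same argument used for pairwise distinctness in Proposition~\ref{prop:radial-schrodinger-counterexample}. Finally, all $Q_{N_i}$ are nonnegative and real analytic near $\overline{B(0,1)}$ by Lemma~\ref{lem:radial-affine-family}, provided $\lambda>2\max N_i$. This gives the required finitely valued random potentials and completes the plan.
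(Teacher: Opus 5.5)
Your proposal is correct and is essentially the paper's proof: you use the affine dependence $\la\Lambda_{Q_N}f,g\ra=A(f,g)+N^2B(f,g)$, split a kernel vector of the $(M+1)\times(M+2)$ Vandermonde matrix in the variables $N_i^2$ into its positive and negative parts, and compare at the origin to separate the means (the paper takes $N_0=1$ and uses $Q_1(0)>0=Q_N(0)$ for $N\ge2$, which is the simplest case of your order-of-vanishing argument). Two small points: $c_{\min}\ne0$ holds because any $M+1$ columns of that Vandermonde matrix are invertible (or take $N_{\min}$ to be the smallest $N_i$ with $c_i\ne0$), and the optional common part $\tau$ must be finitely supported on integers $N<\lambda/2$, or simply taken to be zero.
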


\begin{proof}
Use the family $Q_N$ in Lemma~\ref{lem:radial-affine-family} with $n=3$.  Choose distinct positive integers
\[
  1=N_0<N_1<\cdots<N_{M+1}
\]
and then choose $\lambda>2\max_jN_j$.  Write $S_j=N_j^2$.  The $(M+1)\times(M+2)$ Vandermonde matrix with entries $S_j^r$, $0\le r\le M$, has rank $M+1$, because any $M+1$ columns form an invertible square Vandermonde matrix.  Hence its kernel is one-dimensional.  Moreover, no component of a nonzero kernel vector can vanish; otherwise the remaining $M+1$ components would give a nontrivial null vector for an invertible square Vandermonde matrix.

Let $c=(c_0,\dots,c_{M+1})$ be a nonzero vector in this kernel and decompose the signed measure
\[
  \sum_{j=0}^{M+1}c_j\delta_{N_j}
\]
into its positive and negative parts.  Since the zeroth moment vanishes, these two parts have the same total mass.  After normalization they define two probability laws for integer-valued random variables $N_A$ and $N_B$.  The corresponding random variables
\[
  S_A=N_A^2,
  \qquad
  S_B=N_B^2
\]
satisfy
\[
  \E S_A^m=\E S_B^m,
  \qquad 0\le m\le M.
\]

For fixed $\lambda$, formula \eqref{eq:mu-N-formula-general} gives
\[
  \mu_{\ell,N}^{(3)}
  =\ell+\frac{\lambda^2}{2(\lambda+2\ell+1)}
   -S\frac{2}{\lambda+2\ell+1},
  \qquad S=N^2.
\]
Thus
\[
  \Lambda_{Q_N}=A-SB,
\]
where $A$ and $B$ are fixed diagonal operators in the spherical harmonic basis, independent of $N$.  For each pair of real boundary values $f,g$,
\[
  \la\Lambda_{Q_N}f,g\ra
  =\alpha(f,g)-S\beta(f,g)
\]
with fixed bilinear forms $\alpha$ and $\beta$.  Therefore
\[
  \prod_{j=1}^r\la\Lambda_{Q_N}f_j,g_j\ra
\]
is a polynomial in $S$ of degree at most $r$.  If $r\le M$, the equality of the moments of $S_A$ and $S_B$ through order $M$ gives \eqref{eq:finite-DN-moment-equality} for
\[
  q_A=Q_{N_A},
  \qquad
  q_B=Q_{N_B}.
\]
The random potentials $q_A$ and $q_B$ take only finitely many smooth values.  Hence, for fixed boundary values, each scalar matrix coefficient $\la\Lambda_{q_A}f,g\ra$ or $\la\Lambda_{q_B}f,g\ra$ takes only finitely many complex values, and every finite product appearing in the stated moment identities has finite expectation.  The same polynomial argument applies over the complex trace space: a sesquilinear matrix coefficient is still affine in the real scalar $S$, and its complex conjugate is affine in $S$ as well.

The mean potentials are nevertheless different.  Indeed,
\[
  Q_1(0)=8\frac{\lambda-2}{\lambda+2}>0,
  \qquad
  Q_N(0)=0\quad(N\ge2).
\]
Because $c_0\ne0$, exactly one of the two probability laws assigns positive mass to $N_0=1$.  Hence $\E q_A(0)\ne\E q_B(0)$, possibly after interchanging the labels $A$ and $B$.
\end{proof}

\subsection{A Jensen inequality for mean Schr\"odinger DN maps}
\label{sec:DN-Jensen-potential}

The counterexamples above do not contradict the basic variational concavity of the DN map as a function of the potential.  They show instead that the mean DN map need not lie in the nonlinear range in an identifiable way.  For deterministic potentials, a closely related concavity statement for Schr\"odinger DN maps on compact Riemannian manifolds near the zero potential appears in \cite{CarsteaFeizmohammadiOksanen2023}.  The proposition below is the corresponding Jensen-type averaged form in the present Euclidean setting.

\begin{proposition}\label{prop:DN-Jensen-potential}
Let $q$ be a strongly measurable $\mathcal Q_+^\infty(\OmegaD)$-valued random potential with
\[
  \|q(\omega,\cdot)\|_{L^\infty(\OmegaD)}\le C
  \quad\text{for almost every }\omega,
\]
and put $\overline q=\E q$.  Then
\begin{equation}\label{eq:DN-Jensen-potential}
  \E\Lambda_q\le \Lambda_{\overline q}
\end{equation}
as quadratic forms on $H^{1/2}(\partial\OmegaD)$.  If equality holds and $\OmegaD$ is connected, then $q=\overline q$ almost surely.
\end{proposition}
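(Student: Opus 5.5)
The plan is to use the Dirichlet variational principle. For $q\ge0$ smooth and $f\in H^{1/2}(\partial\OmegaD)$,
\[
  \la\Lambda_q f,f\ra=\min\Big\{\int_\OmegaD |\nabla v|^2+q v^2\,dx:\ v\in H^1(\OmegaD),\ v|_{\partial\OmegaD}=f\Big\},
\]
and the minimum is attained exactly at $v=u_f$. This is Green's formula together with strict convexity of the functional, which uses $q\ge0$. The right-hand side is an infimum of functionals that are affine in $q$, so $q\mapsto\la\Lambda_qf,f\ra$ is concave, and the inequality \eqref{eq:DN-Jensen-potential} should follow without invoking an abstract Jensen theorem.

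\textbf{Inequality.} Let $\overline u$ be the solution for $\overline q$ with trace $f$. The bound $\|q(\omega,\cdot)\|_{L^\infty}\le C$ and strong measurability give that $\overline q=\E q$ is a bounded nonnegative function, smooth if $q$ is Bochner integrable in $C^\infty$. In any case $\Lambda_{\overline q}$ is defined by the same variational formula with an $L^\infty$ potential. Testing the minimization for $q(\omega)$ with the fixed competitor $\overline u$ gives
\[
  \la\Lambda_{q(\omega)}f,f\ra\le\int_\OmegaD|\nabla\overline u|^2+q(\omega)\overline u^2\,dx .
\]
Measurability and integrability of $\omega\mapsto\la\Lambda_{q(\omega)}f,f\ra$ follow from continuity of $q\mapsto\Lambda_q$ in $L^\infty$ and the uniform bound
\[
  0\le\la\Lambda_q f,f\ra\le\int_\OmegaD|\nabla Ef|^2+C(Ef)^2\,dx ,
\]
where $Ef$ is a fixed extension of $f$. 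Taking expectations and using Fubini on the term $\int q\overline u^2$, which is legitimate since $q$ is bounded and $\overline u^2\in L^1$, gives
\[
  \E\la\Lambda_q f,f\ra\le\int_\OmegaD|\nabla\overline u|^2+\overline q\,\overline u^2\,dx=\la\Lambda_{\overline q}f,f\ra .
\]
This proves the inequality as quadratic forms.

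\textbf{Equality.} Suppose now $\E\Lambda_q=\Lambda_{\overline q}$. Fix $f$. The random variable
\[
  D_f(\omega)=\int_\OmegaD|\nabla\overline u|^2+q(\omega)\overline u^2\,dx-\la\Lambda_{q(\omega)}f,f\ra
\]
is nonnegative and has zero mean, so $D_f=0$ almost surely. Hence $\overline u$ is a minimizer for $q(\omega)$, and by uniqueness of the minimizer $\overline u=u^{(\omega)}_f$. Subtracting the two equations gives $(q(\omega)-\overline q)\,\overline u_f=0$ a.e. in $\OmegaD$, almost surely.

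Run this over a countable dense set $\{f_k\}\subset H^{1/2}(\partial\OmegaD)$, so that a single null set is discarded for all $k$. It then suffices to show that for each compact $K\subset\OmegaD$ some $f_k$ has $\overline u_{f_k}\ne0$ a.e. on $K$, which forces $q(\omega)=\overline q$ a.e. on $K$. Exhausting $\OmegaD$ by compacts and using continuity of both functions then gives $q=\overline q$ almost surely.

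\textbf{Main obstacle.} I expect the genuine difficulty to be the nonvanishing of solutions; this is where connectedness of $\OmegaD$ enters. I would take $f=1$. Then $\overline u_1$ solves $(-\Delta+\overline q)\overline u_1=0$ with $\overline u_1=1$ on $\partial\OmegaD$ and $\overline q\ge0$. By the weak maximum principle $\overline u_1\ge0$, and by the strong maximum principle (Harnack's inequality for $L^\infty$ potentials) on the connected set $\OmegaD$ it is strictly positive in the interior. This single function already suffices: no density argument is needed, only that $f=1$ is included among the boundary data. It gives $q(\omega)-\overline q=0$ a.e., for almost every $\omega$. Without connectedness one applies the same argument on each component, and the hypothesis is only needed to invoke the strong maximum principle cleanly.
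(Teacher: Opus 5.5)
Your proposal is correct and follows essentially the same argument as the paper: you test the Dirichlet energy for $q(\omega)$ with the fixed minimizer for $\overline q$ and take expectations, and in the equality case you use $f\equiv1$, the almost sure vanishing of the nonnegative energy defect, and the maximum-principle positivity of $u_{\overline q,1}$ to conclude $(q-\overline q)u_{\overline q,1}=0$. Beyond the paper's proof, you also check measurability and allow $\overline q$ to be merely a bounded potential, so that $\Lambda_{\overline q}$ and the strong maximum principle are used in the $L^\infty$ setting; the paper leaves this point implicit, and your treatment is a welcome refinement.
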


\begin{proof}
For a boundary value $f$, let $u_{q,f}$ minimize
\[
  \int_\OmegaD (|\nabla u|^2+q|u|^2)\,dx
\]
among functions with trace $f$.  Then
\[
  \la\Lambda_qf,f\ra
  \le \int_\OmegaD (|\nabla u_{\overline q,f}|^2+q|u_{\overline q,f}|^2)\,dx.
\]
Taking expectations gives \eqref{eq:DN-Jensen-potential}.

If equality holds as a quadratic form, then for $f\equiv1$ the energy defect vanishes almost surely:
\[
  \int_\OmegaD (|\nabla(u_{q,1}-u_{\overline q,1})|^2
  +q|u_{q,1}-u_{\overline q,1}|^2)\,dx=0.
\]
Thus $u_{q,1}=u_{\overline q,1}$ almost surely.  Subtracting the two equations gives
\[
  (q-\overline q)u_{\overline q,1}=0.
\]
Since $\overline q\ge0$ and the boundary value is $1$, the maximum principle gives $u_{\overline q,1}>0$ in $\OmegaD$.  Hence $q=\overline q$ almost surely.
\end{proof}

\section{The full law of the Schr\"odinger DN map}
\label{sec:law-DN-map}

Expectations and finite moment hierarchies may lose information.  The full law of the DN map does not, provided the corresponding deterministic inverse problem is injective.  We formulate this in a countable way in order to avoid putting an unnecessary topology on a space of boundary operators.

Let
\[
  H_\R=H^{1/2}(\partial\OmegaD;\R)
\]
and fix a countable dense set $\{f_j\}_{j=1}^\infty\subset H_\R$.  For a coefficient $q$ for which the DN map is well defined, set
\[
  T(q)=\big(\la\Lambda_q f_i,f_j\ra\big)_{i,j\ge1}
  \in\R^{\mathbb N\times\mathbb N}.
\]
In this section, the full law of the DN map means the law of this countable matrix representation.  This convention loses no information about the DN map.

\begin{lemma}\label{lem:countable-DN-representation}
Let $B_1$ and $B_2$ be bounded bilinear forms on $H_\R$.  If
\[
  B_1(f_i,f_j)=B_2(f_i,f_j),\qquad i,j\ge1,
\]
then $B_1=B_2$ on $H_\R\times H_\R$.  Consequently, for real Schr\"odinger potentials,
\[
  T(q_1)=T(q_2)
  \quad\Longleftrightarrow\quad
  \Lambda_{q_1}=\Lambda_{q_2}
\]
as real DN maps.
\end{lemma}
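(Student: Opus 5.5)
The plan is a density-plus-continuity argument, applied first to general bounded bilinear forms and then to DN maps. Put $D=B_1-B_2$. This is a bounded bilinear form on $H_\R$, so there is a constant $C$ with
\[
|D(f,g)|\le C\|f\|_{H_\R}\|g\|_{H_\R}.
\]
By hypothesis, $D(f_i,f_j)=0$ for all $i,j\ge1$.

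First we extend the vanishing in one variable. Fix $i$ and let $g\in H_\R$. Choose a subsequence $f_{j_k}\to g$ in $H_\R$, which is possible by density. Boundedness gives
\[
|D(f_i,g)|=|D(f_i,g)-D(f_i,f_{j_k})|\le C\|f_i\|\,\|g-f_{j_k}\|\to0,
\]
so $D(f_i,g)=0$. Then fix $g$ and repeat the same step in the first slot, approximating an arbitrary $f$ by a sequence $f_{i_k}$. This yields $D(f,g)=0$ for all $f,g\in H_\R$. Equivalently, one can simply note that $D$ is jointly continuous and vanishes on the dense set $\{f_i\}\times\{f_j\}$ of $H_\R\times H_\R$.

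For the consequence, we identify each real DN map $\Lambda_q$ with its bilinear form $(f,g)\mapsto\la\Lambda_qf,g\ra$ on $H_\R$, as in Section~\ref{sec:Schrodinger-setup}. The only point needing care is that this form is bounded. For this we use that $\Lambda_q:H^{1/2}(\partial\OmegaD)\to H^{-1/2}(\partial\OmegaD)$ is bounded whenever the Dirichlet problem is well posed. This in turn follows from the standard estimate $\|u_f\|_{H^1(\OmegaD)}\le C\|f\|_{H^{1/2}}$ together with the weak definition
\[
\la\Lambda_qf,g\ra=\int_\OmegaD(\nabla u_f\cdot\nabla v+q\,u_f v)\,dx,
\]
where $v$ is any $H^1$ extension of $g$. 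Given boundedness, the direction $\Lambda_{q_1}=\Lambda_{q_2}\Rightarrow T(q_1)=T(q_2)$ is trivial, and the converse is the first part of the lemma applied with $B_k=\la\Lambda_{q_k}\cdot,\cdot\ra$.

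None of these steps is a real obstacle. The only thing to keep explicit is the boundedness of the DN bilinear form, which is what permits the passage to limits.
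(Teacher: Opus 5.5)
Your proposal is correct and follows essentially the same density-plus-continuity argument as the paper. The only differences are that the paper passes to the limit in both slots simultaneously rather than one at a time, and it takes boundedness of the DN form from its standing convention that $\Lambda_q:H^{1/2}(\partial\OmegaD)\to H^{-1/2}(\partial\OmegaD)$ is bounded, whereas you justify this explicitly via the $H^1$ estimate and the weak Green's-formula definition.
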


\begin{proof}
Let $f,g\in H_\R$.  Choose sequences $f_{i_m}\to f$ and $f_{j_m}\to g$ in $H_\R$.  The equality on the dense set and the continuity of $B_1$ and $B_2$ give
\[
  B_1(f,g)=\lim_{m\to\infty}B_1(f_{i_m},f_{j_m})
  =\lim_{m\to\infty}B_2(f_{i_m},f_{j_m})=B_2(f,g).
\]
The assertion for DN maps follows by applying this to the associated boundary bilinear forms.  The complex sesquilinear DN map is then recovered by complexification, or equivalently by the usual polarization identities.
\end{proof}

\begin{proposition}\label{prop:injective-observable-law-coeff}
Let $X$ and $Y$ be standard Borel spaces and let $F:X\to Y$ be an injective Borel map.  If $x$ and $\widetilde x$ are $X$-valued random variables such that
\[
  \Law(F(x))=\Law(F(\widetilde x)),
\]
then
\[
  \Law(x)=\Law(\widetilde x).
\]
\end{proposition}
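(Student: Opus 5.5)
The plan is to invoke the Lusin--Souslin theorem: an injective Borel map between standard Borel spaces sends Borel sets to Borel sets. Once we know this, the law of $x$ can be read off from the law of $F(x)$ by pulling back through $F$.

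First, fix a Borel set $A\subset X$. By Lusin--Souslin, $F(A)$ is a Borel subset of $Y$. Injectivity of $F$ gives $F^{-1}(F(A))=A$. Hence
\[
  \{x\in A\}=\{F(x)\in F(A)\},
\]
and the same identity holds with $x$ replaced by $\widetilde x$. These are genuine events, since $F(A)$ is Borel and $F(x)$, $F(\widetilde x)$ are random variables; the latter holds because $F$ is Borel and $x,\widetilde x$ are measurable. Applying the hypothesis $\Law(F(x))=\Law(F(\widetilde x))$ to the Borel set $F(A)$ then gives
\[
  \Prob(x\in A)=\Prob(F(x)\in F(A))=\Prob(F(\widetilde x)\in F(A))=\Prob(\widetilde x\in A).
\]
Since $A$ was arbitrary, $\Law(x)=\Law(\widetilde x)$.

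The only substantive step is the Borel measurability of $F(A)$. For a general Borel map the image of a Borel set is merely analytic. This is exactly where injectivity and the standard Borel hypothesis enter: Souslin's separation argument shows that an injective Borel image of a Borel set is Borel. I would quote this from Kechris, Classical Descriptive Set Theory, Theorem 15.1, rather than reprove it.

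There is also a route that avoids the full theorem. Analytic sets are universally measurable, so one can extend both laws of $F(x)$ and $F(\widetilde x)$ to their completions. The hypothesis gives equality of the two completed measures, and in particular equality on the analytic set $F(A)$. This alternative also suffices and could be mentioned as a remark.
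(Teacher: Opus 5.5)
Your proof is correct and is essentially the paper's argument. Both rest on the Lusin--Souslin theorem (Kechris, Theorem~15.1); the paper phrases it as Borel measurability of $F^{-1}\colon F(X)\to X$ and writes $\Law(x)=(F^{-1})_\#\Law(F(x))$, which is exactly your set-by-set identity $\Prob(x\in A)=\Prob(F(x)\in F(A))$.
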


\begin{proof}
By the Lusin--Souslin theorem, $F(X)$ is a Borel subset of $Y$ and the inverse map $F^{-1}:F(X)\to X$ is Borel \cite[Theorem~15.1]{Kechris1995}.  Hence
\[
  \Law(x)=(F^{-1})_\#\Law(F(x))
  =(F^{-1})_\#\Law(F(\widetilde x))=\Law(\widetilde x).
\]
\end{proof}

\begin{corollary}\label{cor:law-DN-determines-law-q}
Let $\mathcal Q$ be a standard Borel coefficient class on which the Schr\"odinger DN map is defined.  Assume that $T:\mathcal Q\to\R^{\mathbb N\times\mathbb N}$ is Borel and that deterministic uniqueness holds on $\mathcal Q$, namely
\[
  \Lambda_{q_1}=\Lambda_{q_2}
  \quad\Longrightarrow\quad
  q_1=q_2,
  \qquad q_1,q_2\in\mathcal Q.
\]
If $q$ and $\widetilde q$ are $\mathcal Q$-valued random variables and
\[
  \Law(T(q))=\Law(T(\widetilde q)),
\]
then
\[
  \Law(q)=\Law(\widetilde q).
\]
In particular, the full law of the DN map determines the law of the random coefficient on any deterministic uniqueness class for which the countable coordinate map $T$ is Borel.  For the smooth nonnegative class $\mathcal Q_+^\infty(\OmegaD)$ with its usual Fr\'echet Borel structure, this Borel property follows from the continuous dependence of the Dirichlet problem on $q$ for fixed boundary values.  The analogous statement holds for conductivities on any class on which the corresponding deterministic DN map is uniquely solvable and the associated countable coordinate map is Borel.
\end{corollary}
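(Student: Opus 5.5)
The plan is to reduce the corollary to Proposition~\ref{prop:injective-observable-law-coeff}, taking $X=\mathcal Q$ and $Y=\R^{\mathbb N\times\mathbb N}$ with the product Borel structure. A countable product of Polish spaces is Polish, so $Y$ is standard Borel, and $\mathcal Q$ is standard Borel by hypothesis. The assumed Borel measurability of $T$ is exactly what the proposition requires.

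The first step is injectivity of $T$ on $\mathcal Q$. Suppose $T(q_1)=T(q_2)$. Lemma~\ref{lem:countable-DN-representation} applies, because the DN bilinear forms are bounded on $H_\R$, and it gives $\Lambda_{q_1}=\Lambda_{q_2}$. Deterministic uniqueness then gives $q_1=q_2$. With injectivity in hand, Proposition~\ref{prop:injective-observable-law-coeff} applied to $F=T$ turns $\Law(T(q))=\Law(T(\widetilde q))$ into $\Law(q)=\Law(\widetilde q)$. That is the main assertion.

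The remaining work is the claim that $T$ is Borel on $\mathcal Q_+^\infty(\OmegaD)$, and I expect this to be the only real obstacle. A map into a countable product is Borel as soon as each coordinate is Borel, so it suffices to show that each $q\mapsto\la\Lambda_qf_i,f_j\ra$ is continuous for the Fréchet topology. I would prove continuity in $L^\infty$, which is weaker.

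Fix a bounded extension $E f\in H^1(\OmegaD)$ of $f$. Write $u_{q,f}=Ef+w_q$, where $w_q\in H^1_0$ solves $(-\Delta+q)w_q=-(-\Delta+q)Ef$ in $H^{-1}$. Since $q\ge0$, the form $\int(|\nabla w|^2+q w^2)$ is coercive on $H^1_0$, and the coercivity constant does not depend on $q$. The difference $w_{q}-w_{q'}$ satisfies
\[
  (-\Delta+q)(w_q-w_{q'})=(q'-q)(Ef+w_{q'}),
\]
so $\|w_q-w_{q'}\|_{H^1}\lesssim\|q-q'\|_{L^\infty}\|u_{q',f}\|_{L^2}$. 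Finally, $\la\Lambda_qf,g\ra=\int_\OmegaD(\nabla u_{q,f}\cdot\nabla Eg+q\,u_{q,f}Eg)\,dx$ by Green's formula, and this expression is continuous in $q$ by the previous bound. Convergence in the Fréchet topology of $C^\infty(\overline\OmegaD)$ implies uniform convergence, so each coordinate is continuous, hence Borel.

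The class $\mathcal Q_+^\infty(\OmegaD)$ is a closed subset of the separable Fréchet space $C^\infty(\overline\OmegaD;\R)$, so it is Polish and therefore standard Borel. The conductivity statement is handled the same way: the weak formulation $\int\gamma\nabla u\cdot\nabla v$ with uniform ellipticity gives the same continuity, and the rest of the argument is unchanged.
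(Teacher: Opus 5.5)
Your proposal is correct and follows the paper's proof. You obtain injectivity of $T$ from Lemma~\ref{lem:countable-DN-representation} together with deterministic uniqueness, and then apply Proposition~\ref{prop:injective-observable-law-coeff} (Lusin--Souslin) with $X=\mathcal Q$, $Y=\R^{\mathbb N\times\mathbb N}$, $F=T$. Your extra steps, namely the uniform-coercivity $L^\infty$-continuity argument for each coordinate $q\mapsto\la\Lambda_qf_i,f_j\ra$ on $\mathcal Q_+^\infty(\OmegaD)$ and the check that $Y$ and $\mathcal Q_+^\infty(\OmegaD)$ are standard Borel, correctly fill in details the paper only asserts.
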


\begin{proof}
By Lemma~\ref{lem:countable-DN-representation}, deterministic uniqueness implies that $T$ is injective on $\mathcal Q$.  Apply Proposition~\ref{prop:injective-observable-law-coeff} with $X=\mathcal Q$, $Y=\R^{\mathbb N\times\mathbb N}$, and $F=T$.
\end{proof}

\begin{remark}
This formal result separates two questions.  The full law of the DN map, understood as the joint law of all countably many DN matrix entries above, is as strong as deterministic uniqueness.  Low-order DN moments are not.
\end{remark}

\section{Averaged Green's operators for random potentials}
\label{sec:Green-potential}

We next consider averaged interior Green's operators.  They behave differently because the interior symbol of $G_q$ contains local information about the coefficient.  All symbolic statements in this section are local in the interior of $\OmegaD$.  More precisely, if $\chi,\psi\in C_c^\infty(\OmegaD)$ and $\psi=1$ near $\supp\chi$, then the symbol of $G_q$ at points of $\supp\chi$ means the full symbol of the localized operator $\chi G_q\psi$.  This convention removes boundary contributions.  In what follows, $\sigma_{-j}$ denotes the homogeneous component of degree $-j$ of this interior symbol.

Let $(\mathcal X,\mathcal F,\Prob)$ be a probability space and let
\[
  q:\mathcal X\to \mathcal Q_+^\infty(\OmegaD)
\]
be a strongly measurable random potential.  We assume throughout this section that, for every $N\ge0$, there exists $C_N$ with
\[
  \|q(\omega,\cdot)\|_{C^N(\overline\OmegaD)}\le C_N
\]
for almost every $\omega$.  Put
\[
  \overline q=\E q,
  \qquad
  \overline G=\E G_q.
\]
The uniform $C^N$ bounds imply that $\overline q\in C^\infty(\overline\OmegaD)$ and that differentiation commutes with expectation for all derivatives used in the local symbol calculus.

\begin{proposition}\label{prop:averaged-G-well-defined}
The expectation $\overline G=\E G_q$ is a well-defined bounded positive self-adjoint operator on $L^2(\OmegaD)$, characterized by
\[
  \la\overline Gf,g\ra=\E\la G_qf,g\ra.
\]
For any $\chi,\psi\in C_c^\infty(\OmegaD)$ with $\psi=1$ near $\supp\chi$, the localized operator $\chi\overline G\psi$ is a classical pseudodifferential operator of order $-2$ in the interior, and its full symbol is obtained by averaging the corresponding full interior symbols of $\chi G_{q(\omega)}\psi$.
\end{proposition}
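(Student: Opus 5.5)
First I will check that the weak expectation exists and defines a bounded operator. Since $q(\omega)\ge0$, the form domain estimate $\la(-\Delta_D+q)u,u\ra\ge\|\nabla u\|^2\ge\lambda_1\|u\|^2$, with $\lambda_1>0$ the first Dirichlet eigenvalue of $-\Delta_D$, gives $0\le G_{q(\omega)}\le G_0$ in the sense of quadratic forms. In particular $\|G_{q(\omega)}\|\le\lambda_1^{-1}$ uniformly in $\omega$. Next, measurability of $\omega\mapsto\la G_{q(\omega)}f,g\ra$ follows because $q\mapsto G_q$ is continuous from $L^\infty$ (hence from the Fr\'echet topology of $C^\infty(\overline\OmegaD)$) into the bounded operators, by the resolvent identity $G_{q_1}-G_{q_2}=-G_{q_1}(q_1-q_2)G_{q_2}$. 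Measurability then comes from composing with the strongly measurable map $q$. The scalar functions are bounded, hence in $L^1$, so $(f,g)\mapsto\E\la G_qf,g\ra$ is a bounded symmetric bilinear form of norm at most $\lambda_1^{-1}$. Riesz representation gives $\overline G$. Self-adjointness and positivity are inherited pointwise: $\la\overline Gf,f\ra=\E\la G_qf,f\ra\ge0$, and strict positivity also holds because $\la G_qf,f\ra\ge \|G_qf\|^2_{H^1_0}$-type bounds stay away from zero uniformly by the $L^\infty$ bound $C_0$.

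For the symbolic part I will use the standard interior parametrix. Writing $P_\omega=-\Delta+q(\omega)$, its symbol is $|\xi|^2+q(\omega,x)$. The recursion $b_{-2}=|\xi|^{-2}$, $b_{-2-j}=$ polynomial expressions in $|\xi|^{-2}$, $\xi$, and $x$-derivatives of $q(\omega)$ of order $\le j$ produces, for each $J$, a symbol $B^{(J)}_\omega=\sum_{j<J}b_{-2-j}(\omega)$, cut off near $\xi=0$. The resulting operator $\mathrm{Op}(B^{(J)}_\omega)$ satisfies $P_\omega\mathrm{Op}(B^{(J)}_\omega)=I+R^{(J)}_\omega$, with $R^{(J)}_\omega$ of order $-J$. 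All seminorms of both the symbol and the remainder are bounded by constants depending only on $C_{J+k}$. Then I will write
\[
  \chi G_{q}\psi=\chi\,\mathrm{Op}(B^{(J)}_\omega)\psi+\chi E^{(J)}_\omega\psi,
\]
using $G_q P_\omega=I$ on functions supported near $\supp\psi$ and the usual pseudolocality argument with an intermediate cutoff. The error $\chi E^{(J)}_\omega\psi$ is then expressed as $G_q$ composed with smoothing or order-$(-J)$ operators whose kernels lie in $C^{J'}$ with norms bounded uniformly in $\omega$, again using $\|G_q\|\le\lambda_1^{-1}$ and interior elliptic regularity with uniform constants.

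Averaging then commutes with everything. Each $b_{-2-j}(\omega)(x,\xi)$ is a polynomial in derivatives of $q$ with $\omega$-independent coefficients. So $\E b_{-2-j}$ is a classical homogeneous symbol of degree $-2-j$ in $C^\infty$, by dominated convergence for $x$-derivatives using the uniform $C^N$ bounds. The operator $\E\,\chi\mathrm{Op}(B^{(J)})\psi$ equals $\chi\mathrm{Op}(\E B^{(J)})\psi$, by Fubini on the oscillatory integral tested against $f,g\in C_c^\infty$. The averaged remainder has uniformly bounded $C^{J'}$ kernel, hence is of order $-J'$. Since $J$ is arbitrary, $\chi\overline G\psi$ is classical of order $-2$ with full symbol $\sum_j\E b_{-2-j}$, i.e.\ the average of the symbols.

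The main obstacle I expect is the uniformity of the remainder, i.e.\ turning ``the parametrix error is smoothing'' into a quantitative statement with kernel bounds in each $C^{k}$ that depend only on the $C_N$. I would handle it by tracking the explicit symbol seminorms through the composition formula and by proving interior elliptic estimates $\|G_qf\|_{H^{s+2}(K)}\le C_s(\|f\|_{H^s}+\|f\|_{L^2})$ with $C_s$ depending only on finitely many $C_N$.
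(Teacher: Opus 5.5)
Your proposal is correct and follows the paper's proof. Both arguments use the same steps: the uniform bound $\|G_q\|\le\lambda_1^{-1}$ from $q\ge0$, continuity of $q\mapsto G_q$ via the resolvent identity to obtain weak measurability, Riesz representation for $\overline G$, and term-by-term averaging of the interior parametrix symbols under the uniform $C^N$ bounds. Your handling of the remainder (intermediate cutoff with pseudolocality, interior elliptic estimates with constants depending only on finitely many $C_N$, and Fubini on the oscillatory integrals) spells out the uniformity that the paper asserts in a single line.
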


\begin{proof}
Since $q\ge0$, the operators $A_q=-\Delta_D+q$ satisfy $A_q\ge\lambda_1 I$, where $\lambda_1>0$ is the first Dirichlet eigenvalue of $-\Delta$.  Hence $\|G_q\|_{L^2\to L^2}\le\lambda_1^{-1}$ uniformly.  The resolvent identity gives
\[
  G_q-G_{\widetilde q}=-G_q(q-\widetilde q)G_{\widetilde q},
\]
and hence $q\mapsto G_q$ is continuous from bounded subsets of $L^\infty(\OmegaD)$, with the $L^\infty$ norm, into $\mathcal L(L^2(\OmegaD))$.  The strong measurability of $q$ therefore implies weak measurability of $\omega\mapsto G_{q(\omega)}$.  Thus the sesquilinear form $(f,g)\mapsto\E\la G_qf,g\ra$ is bounded, positive, and symmetric on $L^2(\OmegaD)$.  By the Riesz representation theorem it defines a bounded positive self-adjoint operator, which is the weak operator expectation of $G_q$.

For the symbolic assertion, fix $\chi,\psi\in C_c^\infty(\OmegaD)$ with $\psi=1$ near $\supp\chi$.  The standard interior parametrix construction for $-\Delta+q(\omega)$ gives full classical symbols for $\chi G_{q(\omega)}\psi$ with seminorms controlled by finitely many $C^N$ norms of $q(\omega)$.  The assumed uniform bounds allow each homogeneous symbol coefficient to be averaged term by term.  This gives the claimed interior symbol for $\chi\overline G\psi$.
\end{proof}

\begin{proposition}\label{prop:Green-Jensen}
With the notation above,
\begin{equation}\label{eq:Green-Jensen}
  G_{\overline q}\le \overline G=\E G_q
\end{equation}
as quadratic forms on $L^2(\OmegaD)$.  Equality holds if and only if $q=\overline q$ almost surely.
\end{proposition}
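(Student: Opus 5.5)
The plan is to use the dual (complementary) variational characterization of the Green's operator, which is convex in $q$, and mirror the energy argument in Proposition~\ref{prop:DN-Jensen-potential}. For $f\in L^2(\OmegaD)$ and a nonnegative potential $q$, the quadratic form $\la G_qf,f\ra$ is given by
\[
  \la G_qf,f\ra=\sup_{v\in H_0^1(\OmegaD)}\Big(2\la f,v\ra-\int_\OmegaD(|\nabla v|^2+q|v|^2)\,dx\Big).
\]
The supremum is attained at $v=G_qf$. For each fixed $v$ the expression in parentheses is affine in $q$, so the supremum is a convex function of $q$. Choosing the particular competitor $v=G_{\overline q}f$ in the variational formula for $q(\omega)$ gives, for almost every $\omega$,
\[
  \la G_{q(\omega)}f,f\ra\ \ge\ 2\la f,G_{\overline q}f\ra-\int_\OmegaD\big(|\nabla G_{\overline q}f|^2+q(\omega)|G_{\overline q}f|^2\big)\,dx .
\]
Taking expectations, and using linearity in $q$ together with the boundedness of $q$ (so Fubini applies), replaces $q(\omega)$ by $\overline q$ on the right. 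The right-hand side then becomes $\la G_{\overline q}f,f\ra$. This gives \eqref{eq:Green-Jensen} for real $f$; the complex case follows by splitting into real and imaginary parts, since all operators are real.

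For the equality case, the direction ``$q=\overline q$ a.s.\ implies equality'' is trivial. Conversely, suppose $\E\la G_qf,f\ra=\la G_{\overline q}f,f\ra$ for all $f$. The defect in the inequality above can be computed exactly by completing the square. With $w=G_qf$ and $v=G_{\overline q}f$, the defect equals
\[
  \int_\OmegaD(|\nabla(w-v)|^2+q|w-v|^2)\,dx\ \ge0 .
\]
Since its expectation vanishes, the defect is zero almost surely. Hence $G_{q(\omega)}f=G_{\overline q}f$ almost surely, for each fixed $f$.

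To pass from this to the potentials, I would fix a countable dense family $\{f_k\}$ in $L^2(\OmegaD)$ so that a single null set serves all $k$, and conclude $G_{q(\omega)}=G_{\overline q}$ almost surely. The resolvent identity from Proposition~\ref{prop:averaged-G-well-defined} then gives
\[
  G_q(q-\overline q)G_{\overline q}=0 .
\]
Since $G_q$ is injective, $(q-\overline q)G_{\overline q}g=0$ for all $g\in L^2$. The range of $G_{\overline q}$ contains $C_c^\infty(\OmegaD)$, because for $\phi\in C_c^\infty$ we have $\phi=G_{\overline q}\big((-\Delta+\overline q)\phi\big)$. Therefore $(q-\overline q)\phi=0$ for every test function, so $q=\overline q$ almost surely. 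This step needs no connectedness, in contrast to the DN case.

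The main obstacle is measure-theoretic bookkeeping rather than analysis. One must handle the null sets uniformly via the countable dense family of $f_k$, and justify integrability of $\omega\mapsto\int q|v|^2$. The latter follows from the uniform $L^\infty$ bound and the weak measurability already established in Proposition~\ref{prop:averaged-G-well-defined}.
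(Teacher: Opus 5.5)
Your proof is correct, and it reaches the inequality by a different mechanism than the paper.

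You use the dual energy principle $\la G_qf,f\ra=\sup_{v\in H_0^1}\bigl(2\la f,v\ra-\int(|\nabla v|^2+q|v|^2)\,dx\bigr)$. Its integrand is affine in $q$, so inserting the single competitor $v=G_{\overline q}f$ and averaging gives the bound. The paper instead uses the exact second-order resolvent expansion \eqref{eq:second-order-resolvent}. There the linear term averages out, leaving $\overline G-G_{\overline q}=\E[G_{\overline q}KG_qKG_{\overline q}]$ with $K=q-\overline q$.

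The two defects are in fact identical. Since $w-v=G_qf-G_{\overline q}f=-G_qKG_{\overline q}f$, your completed square $\int(|\nabla(w-v)|^2+q|w-v|^2)\,dx=\la A_q(w-v),w-v\ra$ equals $\la G_qKG_{\overline q}f,KG_{\overline q}f\ra$. That is exactly the paper's integrand.

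Each formulation has its own advantage. Yours makes the convexity of $q\mapsto\la G_qf,f\ra$ transparent and handles this inequality in parallel with Proposition~\ref{prop:DN-Jensen-potential}. The paper's produces an operator identity that is reused directly in Proposition~\ref{prop:Green-mean-variance} to read off $\Var(q)$ from the order $-6$ symbol.

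In the equality case the paper is slightly more direct. From $KG_{\overline q}f=0$ almost surely, it tests only the countably many $f_m=A_{\overline q}\phi_m$ with $\phi_m\in C_c^\infty(\OmegaD)$ dense. You instead first upgrade to $G_{q(\omega)}=G_{\overline q}$ on one full-measure set via a dense sequence, then apply the resolvent identity, injectivity of $G_q$, and the fact that the range of $G_{\overline q}$ contains the test functions. Both are valid, and you are right that no connectedness is needed here.
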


\begin{proof}
Write $K=q-\overline q$.  Expanding the second resolvent identity to second order around $A_{\overline q}$ gives
\begin{equation}\label{eq:second-order-resolvent}
  G_q=G_{\overline q}-G_{\overline q}KG_{\overline q}
       +G_{\overline q}KG_qKG_{\overline q}.
\end{equation}
After taking expectations, the first-order term vanishes and
\[
  \overline G-G_{\overline q}
  =\E\left[G_{\overline q}KG_qKG_{\overline q}\right].
\]
Thus, for $f\in L^2(\OmegaD)$,
\[
  \la(\overline G-G_{\overline q})f,f\ra
  =\E\la G_qKG_{\overline q}f,KG_{\overline q}f\ra\ge0.
\]
If equality holds for all $f$, then $KG_{\overline q}f=0$ almost surely for each fixed $f$, since $G_q$ is positive and injective.  Choose a countable $L^2$-dense set $\{\phi_m\}_{m=1}^\infty\subset C_c^\infty(\OmegaD)$ and put $f_m=A_{\overline q}\phi_m$.  Intersecting the corresponding full-measure sets gives $K\phi_m=0$ for all $m$, almost surely.  Since multiplication by $K$ is a bounded operator on $L^2(\OmegaD)$ and $\{\phi_m\}$ is dense, $K=0$ in $L^\infty(\OmegaD)$ almost surely.  The converse is immediate.
\end{proof}

\begin{proposition}\label{prop:Green-mean-variance}
The full interior symbol of the averaged Green's operator $\overline G=\E G_q$ determines  the  functions
\[
  \E q
  \qquad\text{and}\qquad
  \Var(q)=\E[(q-\E q)^2]
\]
pointwise in $\OmegaD$.  More precisely, if $G_0=(-\Delta_D)^{-1}$, then in the interior symbol calculus,
\begin{equation}\label{eq:recover-mean-symbol}
  \overline q(x)=-|\xi|^4\sigma_{-4}(\overline G-G_0)(x,\xi),
  \qquad \xi\ne0,
\end{equation}
and
\begin{equation}\label{eq:recover-var-symbol}
  \Var(q)(x)=|\xi|^6\sigma_{-6}(\overline G-G_{\overline q})(x,\xi),
  \qquad \xi\ne0.
\end{equation}
\end{proposition}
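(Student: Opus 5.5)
The plan is to compute the first few homogeneous terms of the interior parametrix symbol of $G_q$ for a fixed deterministic $q$. I will record how each term depends on the jet of $q$, then average using Proposition~\ref{prop:averaged-G-well-defined}. Since the principal symbol $|\xi|^2$ of $-\Delta+q$ does not depend on $x$, the composition formula $p\#b\sim\sum_\alpha\frac{1}{\alpha!}\partial_\xi^\alpha p\,D_x^\alpha b$ with $p=|\xi|^2+q(x)$ has only three kinds of terms: $|\xi|^2b$, $qb$, $2\xi\cdot D_xb$ and $-\Delta_x b$. The localization by $\chi,\psi$ only changes $\chi G_q\psi$ by smoothing operators, so it does not affect the full symbol. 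Solving $p\#b\sim1$ degree by degree gives $b_{-2}=|\xi|^{-2}$, $b_{-3}=0$ and $b_{-4}=-q|\xi|^{-4}$. The $D_x b_{-2}$ term vanishes because $b_{-2}$ is independent of $x$.

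The structural claim I will prove by induction on $j$ is the following. Each $b_{-j}$ is a finite sum of terms $c(\xi)\prod_{i=1}^k\partial_x^{\beta_i}q(x)$, where $c$ is homogeneous and the counts satisfy
\[
  -j=-2-2k-\textstyle\sum_i|\beta_i| .
\]
This holds because each factor $q$ enters through a $qb$ term, which divides by $|\xi|^2$ when solving $|\xi|^2 b_{-j}=\dots$. Each $x$-derivative enters through $2\xi\cdot D_x$ or $-\Delta_x$, and each of these lowers the order by exactly one per derivative. It follows that $b_{-5}$ is linear in $\nabla q$. For $b_{-6}$ the only possibilities are $k=2$ with no derivatives, or $k=1$ with two derivatives. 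Tracking the single quadratic contribution, which comes from $q\,b_{-4}$, gives
\[
  b_{-6}=q^2|\xi|^{-6}+L_6(x,\xi)[\partial^2 q],
\]
where $L_6$ is linear in $\partial^2q$ with fixed coefficients homogeneous in $\xi$.

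Now I average. By Proposition~\ref{prop:averaged-G-well-defined}, the symbol of $\overline G$ is the expectation of these expressions. Under the uniform $C^N$ bounds, derivatives commute with $\E$, so $\E\,\partial^\beta q=\partial^\beta\overline q$ and any term linear in the jet of $q$ averages to the same term with $q$ replaced by $\overline q$. For $G_0$ the symbol is exactly $|\xi|^{-2}$ modulo smoothing terms. Hence
\[
  \sigma_{-4}(\overline G-G_0)=-\overline q\,|\xi|^{-4},
\]
which is \eqref{eq:recover-mean-symbol}. This recovers $\overline q$, so $G_{\overline q}$ and its full symbol are then known. Comparing the degree $-6$ terms of $\overline G$ and $G_{\overline q}$, the linear parts $L_6$ cancel and only the quadratic parts survive:
\[
  \sigma_{-6}(\overline G-G_{\overline q})=(\E q^2-\overline q^{\,2})|\xi|^{-6}=\Var(q)|\xi|^{-6},
\]
which is \eqref{eq:recover-var-symbol}.

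The main obstacle is making the homogeneity bookkeeping rigorous. In particular one must confirm that no quadratic term such as $q\,\partial q$ or $(\partial q)^2$ can appear at degree $-6$, and that the coefficient of $q^2$ is exactly $|\xi|^{-6}$. I would settle this by writing out the recursion
\[
  |\xi|^2b_{-j}=-q\,b_{-j+2}-2\xi\cdot D_xb_{-j+1}+\Delta_xb_{-j+2}
\]
explicitly, using $D_x=-i\partial_x$, and applying the degree count above at each step.
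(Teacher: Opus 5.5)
Your proof is correct. For the mean it agrees with the paper: read off $-q|\xi|^{-4}$ from the parametrix and average. For the variance you take a different route.

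The paper never computes $b_{-5}$ or $b_{-6}$. It expands $G_q$ to second order around the mean potential,
$G_q=G_{\overline q}-G_{\overline q}KG_{\overline q}+G_{\overline q}KG_qKG_{\overline q}$ with $K=q-\overline q$. The first-order term then vanishes at the operator level because $\E K=0$. So $\overline G-G_{\overline q}=\E[G_{\overline q}KG_qKG_{\overline q}]$ is visibly of order $-6$, with principal symbol $\E[K^2]|\xi|^{-6}$. This avoids all parametrix bookkeeping and ties the variance formula to the positivity in Proposition~\ref{prop:Green-Jensen}. The cost is that one must average the symbol of a product containing the random middle factor $G_q$, a mild extension of Proposition~\ref{prop:averaged-G-well-defined}.

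Your route needs only Proposition~\ref{prop:averaged-G-well-defined} as stated. The linear parts $L_6$ cancel because the recursion coefficients are universal, i.e.\ independent of $q$. Your grading $j=2+2k+\sum_i|\beta_i|$ also gives more than is needed here. It yields Lemma~\ref{lem:Green-symbol-triangular} directly: the top coefficient is exactly $(-1)^kq^k|\xi|^{-2k-2}$, and every other monomial has at most $k-1$ factors and at most $2k-2$ derivatives. The paper proves that lemma separately via the finite Neumann expansion in $G_0$.

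Carrying out your recursion gives $L_6=4|\xi|^{-8}\nabla^2q(\xi,\xi)-|\xi|^{-6}\Delta q$. This is consistent with the one-dimensional expansion in Remark~\ref{rem:general-random-obstruction}.

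A trivial slip: you announce three kinds of terms in $p\#b$ and then list four.
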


\begin{proof}
The localized interior parametrix for $-\Delta+q$ gives
\[
  \sigma(G_q)(x,\xi)=|\xi|^{-2}-q(x)|\xi|^{-4}+O(|\xi|^{-5}).
\]
The coefficient of order $-4$ is local and is unaffected by the boundary because of the interior localization convention above.  Averaging therefore gives
\[
  \sigma_{-4}(\overline G-G_0)(x,\xi)=-\overline q(x)|\xi|^{-4},
\]
which is \eqref{eq:recover-mean-symbol}.

Having recovered $\overline q$, the deterministic operator $G_{\overline q}$ and its interior symbol are known.  Put $K=q-\overline q$.  The second resolvent identity \eqref{eq:second-order-resolvent} gives
\[
  \overline G-G_{\overline q}
  =\E\bigl[G_{\overline q}KG_qKG_{\overline q}\bigr].
\]
The factors $G_{\overline q}$, $G_q$, and $G_{\overline q}$ have principal symbols $|\xi|^{-2}$, while multiplication by $K$ has symbol $K(x)$.  Hence the leading symbol of the product has order $-6$ and equals
\[
  K(x)^2|\xi|^{-6}.
\]
All other symbolic composition terms are of lower homogeneous order.  Averaging gives
\[
  \sigma_{-6}(\overline G-G_{\overline q})(x,\xi)
  =\E[K(x)^2]|\xi|^{-6},
\]
which is \eqref{eq:recover-var-symbol}.
\end{proof}

\begin{lemma}\label{lem:Green-symbol-triangular}
For each $k\ge1$, the homogeneous component of order $-2k-2$ in the interior symbol of $G_q=(-\Delta_D+q)^{-1}$ has the form
\begin{equation}\label{eq:triangular-Green-symbol}
  \sigma_{-2k-2}(G_q)(x,\xi)
  =(-1)^kq(x)^k|\xi|^{-2k-2}
   +P_k\bigl(x,\xi;\{\partial^\alpha q(x):|\alpha|\le 2k-2\}\bigr),
\end{equation}
where $P_k$ is a universal polynomial in the displayed derivatives of $q$ and rational homogeneous functions of $\xi$, and every monomial in $P_k$ contains at most $k-1$ factors involving derivatives of $q$.  The undifferentiated function $q$ itself counts as one such factor.
\end{lemma}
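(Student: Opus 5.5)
We prove \eqref{eq:triangular-Green-symbol} by following the standard recursive parametrix construction for $-\Delta+q$ and tracking two gradings at once: the homogeneity degree in $\xi$, and the number of factors involving $q$ or its derivatives (the ``$q$-degree''). Since the statement concerns the localized operator $\chi G_q\psi$, the full symbol coincides modulo $S^{-\infty}$ with that of any interior parametrix, so the recursion below is all we need.

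The operator $-\Delta+q$ has full symbol $p=p_2+p_0$ with $p_2=|\xi|^2$ and $p_0=q(x)$. Its parametrix symbol $b\sim\sum_{j\ge0}b_{-2-j}$ is determined by $b_{-2}=|\xi|^{-2}$ and, for $m\ge1$,
\[
  b_{-2-m}=-|\xi|^{-2}\Bigl(\sum\frac{1}{\alpha!}\partial_\xi^\alpha p_{2}\,D_x^\alpha b_{-2-j}
  +q\,b_{-2-m+2}\Bigr),
\]
where the sum runs over $|\alpha|+j=m$ with $1\le|\alpha|\le2$, $j<m$, and the last term is present only when $m\ge2$. Since $p_2$ is independent of $x$, only the $\xi$-derivatives of $p_2$ enter, and these are of order at most two.

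We establish the following statement by strong induction on $m$:
\begin{quote}
$b_{-2-m}$ is a finite sum of monomials, each a product of a rational homogeneous function of $\xi$ and $s$ factors $\partial^{\beta_1}q,\dots,\partial^{\beta_s}q$, with $2s+\sum|\beta_i|=m$.
\end{quote}
This is a weight count in which each factor $q$ carries weight $2$ and each $x$-derivative weight $1$. It is preserved by the recursion: the term $D_x^\alpha b_{-2-j}$ adds $|\alpha|=m-j$ derivatives, and multiplication by $q$ adds weight $2$, passing from $b_{-2-(m-2)}$ to $b_{-2-m}$. The rational functions in $\xi$ remain rational and homogeneous of the correct degree.

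Now take $m=2k$. A monomial with $s=k$ factors must have $\sum|\beta_i|=0$, so it is a constant times $q^k$. A monomial with fewer factors has $s\le k-1$. The derivative bound follows from the same count: when $s\ge1$, each $|\beta_i|\le m-2s\le 2k-2$, which gives the stated range $|\alpha|\le2k-2$.

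The main obstacle is identifying the coefficient of $q^k$ exactly, since undifferentiated terms could in principle also arise from the derivative terms acting trivially. They cannot: a monomial of weight $2k$ with $k$ factors has zero derivatives, and the derivative terms in the recursion always carry at least one $x$-derivative. So the $q^k$ term in $b_{-2-2k}$ comes only from the $q\,b_{-2k}$ term applied to the $q^{k-1}$ term of $b_{-2k}$. This gives the recursion
\[
  c_k=-|\xi|^{-2}c_{k-1},\qquad c_0=|\xi|^{-2},
\]
whose solution is $c_k=(-1)^k|\xi|^{-2k-2}$.

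Odd orders $-2k-3$ have no pure $q^k$ term and are not part of the claim. Finally, $P_k$ is universal, since the recursion does not depend on $q$. This completes the plan.
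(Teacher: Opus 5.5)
Your proof is correct, but it follows a different route from the paper's.

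\textbf{The paper's route.} The paper works at the operator level. It iterates the resolvent identity $G_q=G_0-G_0qG_q$ to get the finite Born expansion $G_q=\sum_{j=0}^k(-1)^jG_0(qG_0)^j+(-1)^{k+1}G_0(qG_0)^kqG_q$. It argues that, after localization, the remainder has order $-2k-4$. It then reads off each term $G_0(qG_0)^j$ through the Kohn--Nirenberg composition formula. In that organization:
\begin{itemize}
\item the $q$-degree of a monomial is simply the index $j$ of the Born term it comes from;
\item the coefficient $(-1)^kq^k|\xi|^{-2k-2}$ is the product of principal symbols in the $j=k$ term;
\item the derivative bound $2(k-j)\le 2k-2$ comes from counting the order lost in the compositions.
\end{itemize}

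\textbf{Your route.} You stay entirely at the symbol level. You run the recursion $p\#b=1$ for the parametrix of $-\Delta+q$ and track the conserved weight $2s+\sum|\beta_i|=m$.

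\textbf{What each buys.} Your approach needs less analytic input. You only use that $\chi G_q\psi$ agrees with an interior parametrix modulo $S^{-\infty}$. You need no remainder estimate, and no composition of localized Dirichlet Green's operators with multiplication operators and with each other. The weight identity also makes explicit the slightly sharper fact that a monomial with $s$ factors carries exactly $2(k-s)$ derivatives, so $|\beta_i|\le 2k-2s$. Your recursion is also exactly the one that reproduces the one-dimensional expansion displayed in Remark~\ref{rem:general-random-obstruction}.

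The paper's approach, in turn, builds the $q$-degree grading into the decomposition itself, so no bookkeeping is needed. It also ties the triangular structure to the same resolvent expansion used in Propositions~\ref{prop:Green-Jensen} and~\ref{prop:Green-mean-variance}.
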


\begin{proof}
The assertion is local, so we work after inserting cutoffs
$\chi,\psi\in C_c^\infty(\OmegaD)$ with $\psi=1$ near $\supp\chi$.
Let $G_0=(-\Delta_D)^{-1}$.  The resolvent identity
\[
  G_q=G_0-G_0qG_q
\]
implies, for each $N\ge0$,
\begin{equation}\label{eq:finite-resolvent-expansion}
  G_q=\sum_{j=0}^N(-1)^jG_0(qG_0)^j
       +(-1)^{N+1}G_0(qG_0)^NqG_q .
\end{equation}
After localization, $G_0$ and $G_q$ are pseudodifferential operators of order $-2$, multiplication by $q$ has order zero, and therefore the remainder in \eqref{eq:finite-resolvent-expansion} has order $-2N-4$.  Taking $N=k$, the homogeneous symbol of order $-2k-2$ of $G_q$ is determined only by the finitely many terms
\[
  G_0,\, G_0qG_0,\,\dots,\, G_0(qG_0)^k .
\]

For the $j$th term, $G_0(qG_0)^j$ has order $-2j-2$.  Its leading homogeneous symbol is
\[
  q(x)^j|\xi|^{-2j-2},
\]
because the localized free Green's operator has leading interior symbol $|\xi|^{-2}$ and the top term in a symbolic composition is the product of top terms.  Lower homogeneous components of this same $j$th term are obtained from the Kohn--Nirenberg composition formula.  If $A$ and $B$ have full symbols $a$ and $b$, then, with $D_x=(1/i)\partial_x$,
\[
  \sigma(A\circ B)(x,\xi)
  \sim
  \sum_{\alpha\in\mathbb N^n}\frac1{\alpha!}
  \partial_\xi^\alpha a(x,\xi)\,D_x^\alpha b(x,\xi).
\]
Each monomial obtained from this formula still contains at most $j$ factors coming from the $j$ multiplications by $q$, with derivatives allowed to fall on those factors.  Moreover, to contribute to order $-2k-2$ with $j<k$, the total loss of symbolic order is $2(k-j)$, so no derivative of $q$ of order larger than $2(k-j)\le 2k-2$ can occur.

Thus the term with $j=k$ contributes at order $-2k-2$ only through its leading symbol
\[
  (-1)^kq(x)^k|\xi|^{-2k-2}.
\]
All other contributions at the same order come from indices $j<k$ and contain at most $j\le k-1$ factors of derivatives of $q$.  Collecting these lower-degree contributions gives the universal polynomial $P_k$ in \eqref{eq:triangular-Green-symbol}.
\end{proof}

\subsection{Two-atom averaged Green's operators}

The two-atom case has more algebraic rigidity than a general random field. We start with the following lemma.
\begin{lemma}\label{lem:two-atom-jet-closure}
Let $q_1,q_2\in C^\infty$ on an open set and let $0<\tau<1$ be fixed.  Put
\[
  m_j=\tau q_1^j+(1-\tau)q_2^j.
\]
For each $d\ge1$, the derivatives of $m_1,\dots,m_d$ determine every finite linear combination of expressions of the form
\[
  \tau\prod_{\nu=1}^r \partial^{\alpha_\nu}q_1
  +(1-\tau)\prod_{\nu=1}^r \partial^{\alpha_\nu}q_2,
  \qquad 0\le r\le d,
\]
where the multi-indices $\alpha_\nu$ are arbitrary.  Thus a differentiated occurrence and an undifferentiated occurrence of $q_\ell$ are both counted as one factor.
\end{lemma}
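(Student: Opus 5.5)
The plan is to reduce everything to the two functions $m_1=\tau q_1+(1-\tau)q_2$ and $\delta:=q_1-q_2$. Writing $q_1=m_1+(1-\tau)\delta$ and $q_2=m_1-\tau\delta$, I will expand each product $\prod_{\nu=1}^r\partial^{\alpha_\nu}q_\ell$ multilinearly. This gives
\[
  \tau\prod_{\nu=1}^r \partial^{\alpha_\nu}q_1+(1-\tau)\prod_{\nu=1}^r \partial^{\alpha_\nu}q_2
  =\sum_{S\subset\{1,\dots,r\}} c_{|S|}(\tau)\prod_{\nu\notin S}\partial^{\alpha_\nu}m_1\prod_{\nu\in S}\partial^{\alpha_\nu}\delta ,
\]
with $c_s(\tau)=\tau(1-\tau)^s+(1-\tau)(-\tau)^s$. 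Note that $c_0=1$, $c_1=0$, $c_2=\tau(1-\tau)$ and $c_3=\tau(1-\tau)(1-2\tau)$. Since $m_1$ and all its derivatives are data, it suffices to show that the data determine every product $\prod_{\nu\in S}\partial^{\alpha_\nu}\delta$ with $|S|\le d$ and $c_{|S|}(\tau)\ne0$.

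\textbf{Case $d\le 2$.} Only $|S|=2$ matters, because $c_1=0$. From $m_2-m_1^2=\tau(1-\tau)\delta^2$ the function $w:=\delta^2$ is known. I need every product $\partial^\gamma\delta\,\partial^\eta\delta$, and the difficulty is that derivatives of $w$ only give symmetrized combinations such as $\partial_{x_i}\partial_{x_j}w=2\delta\,\partial_{ij}\delta+2\partial_i\delta\,\partial_j\delta$, which do not separate the individual products.

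The fix is to work first on the open set $U=\{\delta\ne0\}$. There the identity
\[
  \partial^\gamma\delta\,\partial^\eta\delta=\frac{(\delta\,\partial^\gamma\delta)(\delta\,\partial^\eta\delta)}{w}
\]
holds, so it is enough to know $\delta\,\partial^\gamma\delta$. I would obtain these by induction on $|\gamma|$ from $\partial^\gamma w=\sum_{\beta\le\gamma}\binom{\gamma}{\beta}\partial^\beta\delta\,\partial^{\gamma-\beta}\delta$. The two extreme terms give $2\delta\,\partial^\gamma\delta$. Each middle term is a product of strictly lower-order derivatives, and is recovered through the displayed quotient formula using the inductive hypothesis. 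This is a triangular induction.

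On the interior of $\{\delta=0\}$ all these products vanish. On the boundary of $U$ they are determined by continuity, since every point of $\overline\Omega$ (here the open set) lies either in the closure of $U$ or in the interior of the zero set.

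\textbf{Case $d\ge3$.} If $\tau=1/2$, then $c_s=0$ for every odd $s$. For even $|S|$, the product of $\delta$-derivatives is a product of pairs, and each pair is already determined by the previous case.

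If $\tau\ne1/2$, the third central moment is
\[
  m_3-3m_1m_2+2m_1^3=\tau(1-\tau)(1-2\tau)\delta^3 .
\]
This determines $\delta^3$, hence $\delta$ pointwise via the real cube root. Since $\delta$ is smooth, all of its derivatives and all products of them are then determined, which finishes the proof. The identity for the third moment is a routine check with the two-point law.

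\textbf{Main obstacle.} I expect the hardest step to be separating the individual products $\partial^\gamma\delta\,\partial^\eta\delta$ from the symmetrized data $\partial^\beta(\delta^2)$. The division by $w$ and the continuity argument across the zero set of $\delta$ are where care is needed. I would verify the induction first in the simplest instance, $\delta\,\partial_{xy}\delta$ versus $\partial_x\delta\,\partial_y\delta$, and then generalize.
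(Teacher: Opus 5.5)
Your proof is correct and follows the paper's strategy. Both arguments reduce to $m_1$ and $\delta=q_1-q_2$. Both recover the quadratic products from $m_2-m_1^2=\tau(1-\tau)\delta^2$ on $\{\delta\ne0\}$ and extend across the zero set by continuity. Both use the third central moment to fix the sign of $\delta$ when $\tau\ne1/2$, and sign symmetry when $\tau=1/2$.

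The differences are only technical, and each makes a step more explicit or more global. Your triangular induction for $\delta\,\partial^\gamma\delta$ replaces the paper's local choice of a smooth branch $\pm\sqrt{V/(\tau(1-\tau))}$. Your real cube root of the third central moment gives $\delta$ everywhere, instead of dividing by $V$ on $\{V>0\}$. For $\tau=1/2$, the vanishing of $c_s(1/2)$ for odd $s$, together with pairing, spells out the paper's branch-independence argument.
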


\begin{proof}
The assertion is local.  Write $s=1-\tau$ and set
\[
  \mu=m_1,
  \qquad
  V=m_2-m_1^2=\tau s(q_1-q_2)^2.
\]
Let $U_0$ be the interior of $\{V=0\}$ and let $U_1=\{V>0\}$.  On $U_0$ one has $q_1=q_2=\mu$, so
\[
  \tau\prod_{\nu=1}^r \partial^{\alpha_\nu}q_1
  +s\prod_{\nu=1}^r \partial^{\alpha_\nu}q_2
  =\prod_{\nu=1}^r \partial^{\alpha_\nu}\mu.
\]
Thus the claim is immediate on $U_0$.

We next work on $U_1$.  The degree-one expressions are exactly the derivatives of $\mu$.  For degree two, let $\delta=q_1-q_2$.  Then
\[
  \delta^2=\frac{V}{\tau s},
  \qquad
  q_1=\mu+s\delta,
  \qquad
  q_2=\mu-\tau\delta .
\]
The sign of $\delta$ is not determined by $\mu$ and $V$ alone.  However, every weighted expression of degree at most two is independent of this sign.  For example, for arbitrary multi-indices $\alpha,\beta$,
\begin{equation}\label{eq:two-atom-quadratic-jets}
\begin{split}
&\tau(\partial^\alpha q_1)(\partial^\beta q_1)
+s(\partial^\alpha q_2)(\partial^\beta q_2) \\
&\hspace{1cm}
=(\partial^\alpha\mu)(\partial^\beta\mu)
 +\tau s(\partial^\alpha\delta)(\partial^\beta\delta),
\end{split}
\end{equation}
and the last product is determined by $V$ because both derivatives of $\delta$ change sign when the branch is changed.  Equivalently, it can be written locally in terms of $V$ and its derivatives on $U_1$.  The special case $|\alpha|=|\beta|=1$ gives the explicit formula
\[
\begin{split}
&\tau(\partial_iq_1)(\partial_jq_1)
 +s(\partial_iq_2)(\partial_jq_2) \\
&\hspace{1cm}=\partial_i\mu\,\partial_j\mu
  +\frac{\partial_iV\,\partial_jV}{4V}.
\end{split}
\]
Terms containing one undifferentiated factor of $q_\ell$ and one differentiated factor are obtained in the same way, or by differentiating $m_2=\tau q_1^2+s q_2^2$ and subtracting already determined quadratic derivative products.  Hence all weighted expressions with at most two factors are determined.

It remains to handle degrees $d\ge3$.  If $\tau\ne1/2$, the centered third moment
\[
  c_3=m_3-3m_1m_2+2m_1^3
  =\tau s(1-2\tau)(q_1-q_2)^3
\]
determines the signed difference on $U_1$, since
\[
  q_1-q_2=\frac{c_3}{(1-2\tau)V}.
\]
Consequently
\[
  q_1=\mu+s\frac{c_3}{(1-2\tau)V},
  \qquad
  q_2=\mu-\tau\frac{c_3}{(1-2\tau)V},
\]
and differentiating these formulas determines the full jets of both labeled atoms from the derivatives of $m_1,m_2,m_3$.  Thus any weighted expression with at most $d$ factors involving derivatives of the atoms is determined from the derivatives of $m_1,\dots,m_d$.

If $\tau=1/2$, then the labels carry equal weights.  On each component of $U_1$ the functions $m_1$ and $m_2$ determine the unordered pair of atoms, equivalently the two branches $\mu\pm\sqrt V$.  A local choice of one branch determines both jets, and changing the branch merely interchanges the two equal-weight summands.  Hence the weighted expression is branch-independent and is determined by the derivatives of $m_1$ and $m_2$.

Finally, $U_0\cup U_1$ is dense.  Its complement is contained in the boundary of the closed set $\{V=0\}$, and every such boundary point is a limit point of $U_1$.  The original weighted expression is smooth.  Therefore its values on the remaining boundary points are uniquely fixed by continuous extension from the already determined values on $U_0\cup U_1$.
\end{proof}

We then have

\begin{proposition}\label{prop:two-atom-Green-moments}
Let $q_1,q_2\in \mathcal Q_+^\infty(\OmegaD)$ and let
\[
  E=\tau G_{q_1}+(1-\tau)G_{q_2},
  \qquad 0<\tau<1.
\]
Then the full interior symbol of $E$ determines
\[
  m_k=\tau q_1^k+(1-\tau)q_2^k,
  \qquad k=1,2,3,\ldots.
\]
In particular, $E$ determines the pointwise two-atom measure
\[
  \tau\delta_{q_1(x)}+(1-\tau)\delta_{q_2(x)}
\]
for each $x\in\OmegaD$.
\end{proposition}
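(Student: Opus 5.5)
We argue by induction on $k$, combining the triangular structure of Lemma~\ref{lem:Green-symbol-triangular} with the jet-closure statement of Lemma~\ref{lem:two-atom-jet-closure}. Since $E$ is a fixed convex combination of two deterministic Green's operators, its localized full interior symbol is the corresponding convex combination of the symbols of $\chi G_{q_1}\psi$ and $\chi G_{q_2}\psi$. This is the same averaging as in Proposition~\ref{prop:averaged-G-well-defined}, now with a two-point law. Hence, for each $k\ge1$, the known component $\sigma_{-2k-2}(E)$ equals
\[
  (-1)^k m_k(x)|\xi|^{-2k-2}
  +\tau P_k\bigl(x,\xi;\{\partial^\alpha q_1\}\bigr)
  +(1-\tau)P_k\bigl(x,\xi;\{\partial^\alpha q_2\}\bigr),
\]
where $P_k$ is the same universal polynomial for both atoms.

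For $k=1$, the polynomial $P_1$ contains no factor involving $q$. It therefore depends only on $\xi$, and its contribution is the same as in the symbol of $G_0$. Consequently $m_1=\overline q$ is read off exactly as in \eqref{eq:recover-mean-symbol}.

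For the inductive step, assume $m_1,\dots,m_{k-1}$ are known as smooth functions on $\OmegaD$. Expand $P_k$ into monomials. Each monomial is a coefficient that is rational and homogeneous in $\xi$ and universal, multiplied by a product $\prod_{\nu=1}^r\partial^{\alpha_\nu}q$ with $r\le k-1$. The $\tau$-weighted sum over the two atoms is then a finite linear combination, with known coefficients depending on $\xi$, of the expressions
\[
  \tau\prod_{\nu=1}^r\partial^{\alpha_\nu}q_1+(1-\tau)\prod_{\nu=1}^r\partial^{\alpha_\nu}q_2,
  \qquad r\le k-1.
\]
These are exactly the expressions covered by Lemma~\ref{lem:two-atom-jet-closure} with $d=k-1$. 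That lemma says they are determined by the derivatives of $m_1,\dots,m_{k-1}$, which are known by the induction hypothesis. Subtracting this known quantity from $\sigma_{-2k-2}(E)$ and multiplying by $(-1)^k|\xi|^{2k+2}$ gives $m_k(x)$.

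One point needs care at this step: Lemma~\ref{lem:two-atom-jet-closure} takes derivatives of the $m_j$ as input. This is harmless because $m_j$ has been recovered as a smooth function on all of $\OmegaD$ (locally near each interior point), so all its derivatives are available. For $d\le2$ the lemma relies only on branch-independent expressions, which avoids any sign ambiguity in $q_1-q_2$.

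Finally, once all $m_k(x)=\int t^k\,d\mu_x(t)$ are known for the compactly supported measure $\mu_x=\tau\delta_{q_1(x)}+(1-\tau)\delta_{q_2(x)}$, the measure itself is determined. By the Weierstrass approximation theorem, the moments determine the integrals of all continuous functions on a compact interval containing the support, and hence determine $\mu_x$.

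I expect the main obstacle to be confirming that the lower-order symbol contributions are genuinely of the form handled by Lemma~\ref{lem:two-atom-jet-closure}. The two atoms must be inserted into the same universal polynomial, and the averaging must be linear over whole monomials. This holds because $E$ is a convex combination of the full operators, so each full symbol, and each monomial within it, is averaged as a whole rather than factor by factor.
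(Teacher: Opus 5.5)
Your proposal is correct and follows the paper's proof essentially step by step. You recover $m_1$ from the order $-4$ symbol, then at order $-2k-2$ use Lemma~\ref{lem:Green-symbol-triangular} to isolate $(-1)^k m_k|\xi|^{-2k-2}$, and Lemma~\ref{lem:two-atom-jet-closure} with $d=k-1$ to subtract the averaged lower terms. The only difference is the final step: you use Weierstrass density for moment determinacy, while the paper also writes the atoms explicitly through $V=m_2-m_1^2$ and the centered third moment $c_3$; both arguments are valid.
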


\begin{proof}
The assertion is local.  The order $-4$ symbol gives $m_1$.  Suppose that $m_1,\dots,m_{k-1}$ have been recovered.  By Lemma~\ref{lem:Green-symbol-triangular}, the order $-2k-2$ symbol of $G_q$ consists of the leading term $(-1)^kq^k|\xi|^{-2k-2}$ plus lower terms containing at most $k-1$ factors of derivatives of $q$.  After taking the weighted two-atom average, Lemma~\ref{lem:two-atom-jet-closure} determines the weighted averages of all these lower terms from the derivatives of $m_1,\dots,m_{k-1}$.  Removing them from the known symbol of $E$ leaves
\[
  (-1)^k m_k(x)|\xi|^{-2k-2}.
\]
Thus $m_k$ is recovered recursively.

The pointwise two-atom measure is determined by its full moment sequence.  There is no moment-determinacy issue here, since the measure has finite support.  More concretely, write
\[
  V=m_2-m_1^2=\tau(1-\tau)(q_1-q_2)^2.
\]
If $V(x)=0$, then both atoms coincide and the measure is $\delta_{m_1(x)}$.  If $V(x)>0$ and $\tau=1/2$, the atoms are
\[
  m_1(x)\pm \sqrt{V(x)}.
\]
If $V(x)>0$ and $\tau\ne1/2$, then with
\[
  c_3=m_3-3m_1m_2+2m_1^3
\]
one has
\[
  q_1-q_2=\frac{c_3}{(1-2\tau)V},
  \qquad
  q_1=m_1+(1-\tau)(q_1-q_2),
  \qquad
  q_2=m_1-\tau(q_1-q_2).
\]
Thus the weighted two-atom measure is determined pointwise.
\end{proof}

\begin{corollary}\label{cor:Green-rigidity-potential}
If $q,q_1,q_2\in \mathcal Q_+^\infty(\OmegaD)$ and
\[
  G_q=\tau G_{q_1}+(1-\tau)G_{q_2},
  \qquad 0<\tau<1,
\]
then $q=q_1=q_2$ in $\OmegaD$.
\end{corollary}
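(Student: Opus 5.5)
The plan is to compare interior symbol moments on both sides, using Proposition~\ref{prop:two-atom-Green-moments} on the right and the deterministic symbol expansion of Lemma~\ref{lem:Green-symbol-triangular} on the left.

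First, the left-hand side $G_q$ is itself a (trivial) one-atom average. So the same recursion as in Proposition~\ref{prop:two-atom-Green-moments}, with $q_1=q_2=q$, recovers $q^k$ from the full interior symbol for every $k$. In particular the order $-4$ and order $-6$ symbols of $G_q$ give $q$ and $q^2$.

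Second, the right-hand side is $E=\tau G_{q_1}+(1-\tau)G_{q_2}$. By Proposition~\ref{prop:two-atom-Green-moments}, its full interior symbol determines
\[
  m_k=\tau q_1^k+(1-\tau)q_2^k .
\]
Since the symbols of the two sides coincide, we first compare at order $-4$. By \eqref{eq:recover-mean-symbol} applied to both sides,
\[
  q=m_1=\tau q_1+(1-\tau)q_2 .
\]

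The key step is order $-6$. Apply the recursion of Proposition~\ref{prop:two-atom-Green-moments} to both sides. On each side, the lower-order term $P_2$ in \eqref{eq:triangular-Green-symbol} is linear in derivatives of the potential, so its average depends only on derivatives of $m_1$. Since $m_1=q$ on both sides, these lower terms agree. Hence the leading terms agree as well:
\[
  q^2=m_2=\tau q_1^2+(1-\tau)q_2^2 .
\]
Equivalently, one may invoke Proposition~\ref{prop:Green-mean-variance} with the random potential $Q$ taking the values $q_1,q_2$ with probabilities $\tau,1-\tau$. Then $\E G_Q=G_q$ and $\E Q=q$, so
\[
  \Var(Q)=|\xi|^6\sigma_{-6}(G_q-G_{q})=0 .
\]
Either way,
\[
  m_2-m_1^2=\tau(1-\tau)(q_1-q_2)^2=0
\]
pointwise, and since $0<\tau<1$ this gives $q_1=q_2$ in $\OmegaD$. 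Then $q=m_1=q_1$.

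There is an even shorter route via Proposition~\ref{prop:Green-Jensen}. With $Q$ as above, we have $\overline G=G_q$, and $\E Q=q$ by the order $-4$ symbol comparison. The hypothesis then reads $G_{\overline Q}=\overline G$, which is the equality case of Proposition~\ref{prop:Green-Jensen}. Hence $Q=\overline Q$ almost surely, that is, $q_1=q_2=q$, because both atoms have positive probability. I would present this Jensen argument as the main proof.

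The only point needing care is that the symbol identification $\E Q=q$ is local. The hypothesis is an operator identity on $L^2(\OmegaD)$, so the localized operators $\chi G\psi$ agree for all interior cutoffs, and hence so do their full interior symbols. This gives $\E Q=q$ on all of $\OmegaD$, and hence as $L^\infty$ functions, which is what Proposition~\ref{prop:Green-Jensen} requires. The uniform $C^N$ bounds assumed for the Jensen proposition hold trivially here, since $Q$ takes only two smooth values. I do not expect a genuine obstacle.
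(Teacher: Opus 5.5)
Your proposal is correct. Your first argument is exactly the paper's proof: the order $-4$ symbol gives $q=m_1$, and at order $-6$ the term $P_2$ from \Cref{lem:Green-symbol-triangular} is affine in the jet of the potential, so its $\tau$-average equals its value at $m_1=q$. This leaves $q^2=m_2$, hence $\tau(1-\tau)(q_1-q_2)^2=0$. The variant via \Cref{prop:Green-mean-variance} is the same argument packaged through the variance formula.

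The route you prefer to present as the main proof is genuinely different. You use the symbol calculus only at order $-4$, to get $\E Q=q$. You then read the hypothesis as the equality case $G_{\overline Q}=\E G_Q$ of \Cref{prop:Green-Jensen}, whose proof is purely operator-theoretic: the second-order resolvent identity plus positivity and injectivity of $G_Q$. This avoids the order $-6$ composition calculus and the triangular structure lemma entirely.

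What the paper's symbolic route buys in return is locality. It uses only the full interior symbols, so it would still give $q=q_1=q_2$ on an open set $U$ if the identity were known only after localization to $U$, or only modulo smoothing operators. The Jensen equality argument needs the exact quadratic-form identity on all of $L^2(\OmegaD)$, and therefore $\E Q=q$ on all of $\OmegaD$. You correctly note that the interior symbol comparison supplies this here, because the hypothesis is a global operator identity.
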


\begin{proof}
The order $-4$ symbol gives $q=\tau q_1+(1-\tau)q_2$.  At order $-6$, the lower symbolic terms are linear in the jet of the potential, by Lemma~\ref{lem:Green-symbol-triangular} with $k=2$.  Their weighted average for $q_1,q_2$ therefore agrees with the corresponding lower term for $q$.  After this cancellation, the order $-6$ symbols give
\[
  0=\tau(q_1-q)^2+(1-\tau)(q_2-q)^2
  =\tau(1-\tau)(q_1-q_2)^2.
\]
Thus $q_1=q_2=q$.
\end{proof}

\begin{remark}[Why general randomness is different]\label{rem:general-random-obstruction}
For a general random field, $\E G_q$ determines a hierarchy of averaged local jet invariants, not only pointwise moments $\E q^k$.  This is already visible in one dimension.  In the Kohn--Nirenberg convention,
\begin{align*}
 \sigma(( -\partial_x^2+q)^{-1})(x,\xi)
 & =\xi^{-2}-q\xi^{-4}-2iq'\xi^{-5}
 +(q^2+3q'')\xi^{-6} \\
 &\quad +i(6qq'+4q''')\xi^{-7} \\
 &\quad -(q^3+13qq''+10(q')^2+5q'''')\xi^{-8}+\cdots.
\end{align*}
After averaging, the coefficient of $\xi^{-8}$ contains not only $\E q^3$, but also $\E(qq'')$ and $\E((q')^2)$.  Thus higher pointwise moments are mixed with derivative correlations.  The two-atom case has algebraic closure relations that make these derivative correlations dependent on lower moments.  For a general random field there is no analogous closure relation: the averaged symbol records mixed jet correlations, not only the pointwise moments $\E q(x)^k$.
\end{remark}

\subsection{Finite-dimensional random potential models}

The preceding obstruction comes from the freedom of the random field.  If the randomness is finite-dimensional with known spatial profiles, the averaged Green's operator can again determine the law.

\begin{proposition}\label{prop:finite-dimensional-Green-model}
Let
\[
  q_X(x)=q_0(x)+\sum_{j=1}^d X_jV_j(x),
\]
where $q_0,V_1,\dots,V_d\in C^\infty(\overline\OmegaD)$ are known, and where the law of $X=(X_1,\dots,X_d)$ is compactly supported and satisfies $q_X\ge0$ almost surely.  Assume that there is a nonempty open set $U\Subset\OmegaD$ such that, for every $k\ge1$, the functions
\[
  V^\alpha=V_1^{\alpha_1}\cdots V_d^{\alpha_d},
  \qquad |\alpha|=k,
\]
are linearly independent on $U$.  Then the averaged Green's operator
\[
  \E G_{q_X}
\]
determines all mixed moments $\E X^\alpha$.  Consequently it determines the law of $X$.
\end{proposition}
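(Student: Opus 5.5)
We argue by induction on the total degree $k=|\alpha|$, recovering at stage $k$ all mixed moments $\E X^\alpha$ with $|\alpha|=k$ from the order $-2k-2$ symbol of $\overline G=\E G_{q_X}$ on $U$. The mechanism is the triangular structure of Lemma~\ref{lem:Green-symbol-triangular}. The uniform $C^N$ bounds required in this section hold because $X$ is compactly supported and the profiles are smooth. Hence Proposition~\ref{prop:averaged-G-well-defined} applies, and the full interior symbol of $\overline G$ is the expectation of the symbols of $G_{q_X}$.

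The key observation is that every quantity appearing in the symbol of $G_{q_X}$ is a polynomial in $X$, with coefficients that are known smooth functions. Indeed $\partial^\beta q_X=\partial^\beta q_0+\sum_j X_j\partial^\beta V_j$ is affine in $X$. By Lemma~\ref{lem:Green-symbol-triangular}, the lower term $P_k$ in $\sigma_{-2k-2}(G_{q_X})$ is a universal polynomial in the jets of $q_X$ in which every monomial has at most $k-1$ factors. Expanding each factor as affine in $X$ shows that $P_k$ is a polynomial in $X$ of degree at most $k-1$. Its coefficients are explicit known functions of $(x,\xi)$ built from $q_0$, the $V_j$, and their derivatives. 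Hence $\E P_k$ is a known linear combination of the moments $\E X^\beta$ with $|\beta|\le k-1$, and these are available by the induction hypothesis.

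The leading term is handled by expanding $q_X^k=(q_0+\sum_jX_jV_j)^k$. Its top-degree part in $X$ is $\sum_{|\alpha|=k}\binom{k}{\alpha}X^\alpha V^\alpha$, and the remaining parts have degree at most $k-1$ in $X$ with known coefficients. Subtracting from $\sigma_{-2k-2}(\overline G)$ all contributions already determined therefore leaves
\[
(-1)^k|\xi|^{-2k-2}\sum_{|\alpha|=k}\binom{k}{\alpha}\E[X^\alpha]\,V^\alpha(x),\qquad x\in U.
\]
This is a known function on $U$. By the assumed linear independence of $\{V^\alpha\}_{|\alpha|=k}$ on $U$, the coefficients $\binom{k}{\alpha}\E X^\alpha$ are uniquely determined, and hence so is each $\E X^\alpha$. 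The base case $k=0$ is trivial, since $\E X^0=1$. Alternatively, one can start from the order $-4$ symbol, which gives $\E q_X=q_0+\sum_j\E X_jV_j$, and then use the linear independence of the $V_j$.

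The law is then determined by its moments. Since $X$ has compact support, its moments determine it: polynomials are dense in $C(K)$ for compact $K\supset\supp\Law(X)$ by Stone--Weierstrass, and two compactly supported laws with equal moments can be placed on a common $K$.

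The main obstacle is the bookkeeping claim that $P_k$ has degree at most $k-1$ in $X$. This needs the "at most $k-1$ factors" clause of Lemma~\ref{lem:Green-symbol-triangular} to count each differentiated occurrence $\partial^\beta q$ as one factor, and it does, as stated. Also needed is that $\xi$-dependence only enters through known rational homogeneous functions. With both points in hand, averaging commutes with the finite polynomial expansion, and no closure relation of the type in Remark~\ref{rem:general-random-obstruction} is needed: derivative correlations such as $\E(q_X\partial^2q_X)$ become known combinations of lower moments of $X$.
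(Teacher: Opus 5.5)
Your proposal is correct and follows essentially the same route as the paper. You use Lemma~\ref{lem:Green-symbol-triangular} together with the fact that every derivative of $q_X$ is affine in $X$ to show that the lower symbol terms involve only moments of degree $<k$. You then isolate $\sum_{|\alpha|=k}\binom{k}{\alpha}\E[X^\alpha]V^\alpha$ on $U$, invoke linear independence, and conclude by moment determinacy for compactly supported laws. The only cosmetic difference is that the paper first recovers $M_k=\E[q_X^k]$ as a function and then expands it, whereas you perform both subtractions in a single step.
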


\begin{proof}
We first recover the functions
\[
  M_k(x)=\E[q_X(x)^k],
  \qquad k=1,2,3,\ldots.
\]
The recovery is recursive.  By Lemma~\ref{lem:Green-symbol-triangular}, at the symbolic order where $q_X^k$ first appears, all lower-order derivative contributions involve at most $k-1$ factors of $q_X$ or its derivatives.  Since every derivative of $q_X$ is affine in $X$, these lower contributions involve only mixed moments of degree $<k$, already known by induction.  Hence the known symbol of $\E G_{q_X}$ determines $M_k$ after the lower-degree terms have been subtracted.

Assume the mixed moments of degree $<k$ have been recovered.  Expanding $M_k$ gives
\[
  M_k(x)=\sum_{\ell=0}^k\binom{k}{\ell}q_0(x)^{k-\ell}
  \sum_{|\alpha|=\ell}\binom{\ell}{\alpha}
  \E[X^\alpha]V(x)^\alpha.
\]
All terms with $\ell<k$ are known.  Subtracting them leaves
\[
  \sum_{|\alpha|=k}\binom{k}{\alpha}\E[X^\alpha]V^\alpha(x).
\]
Restricting to $U$ and using the stated linear independence of the functions $V^\alpha$, the coefficients $\E[X^\alpha]$ with $|\alpha|=k$ are determined.  This proves by induction that all mixed moments of $X$ are determined.  Since the law of $X$ is compactly supported in $\mathbb R^d$, its moment sequence determines its law; for example, polynomials are dense in the continuous functions on any compact set containing the support.
\end{proof}

\begin{remark}
This finite-dimensional result is included as a model case, not as a replacement for the general random-field question.  It shows how the obstruction in Remark~\ref{rem:general-random-obstruction} disappears when all jet correlations come from the same finite random vector.
\end{remark}

\appendix

\section{Conductivity boundary counterexamples}
\label{app:conductivity-DN-counterexamples}

The results in this appendix are the conductivity analogues of the boundary-map obstructions in the main text.  The deterministic conductivity class used here is
\[
  \mathcal G_+^\infty(\OmegaD)
  =\{\gamma\in C^\infty(\overline\OmegaD;\R):\gamma>0\text{ on }\overline\OmegaD\}.
\]
All random conductivities in the appendix take values in this class, with the uniform lower, upper, and differentiability bounds stated where averaged Green's symbols are used.  We first recall the deterministic notation used for the conductivity equation.

For $\gamma\in\mathcal G_+^\infty(\OmegaD)$, let
\[
  R_\gamma=(-\nabla\cdot\gamma\nabla)^{-1}_{D}
\]
be the Dirichlet conductivity Green's operator, and let
\[
  \Lambda_\gamma f=\gamma\partial_\nu u_f|_{\partial\OmegaD},
  \qquad
  \nabla\cdot(\gamma\nabla u_f)=0,
  \qquad
  u_f|_{\partial\OmegaD}=f.
\]
Then
\begin{equation}\label{eq:Alessandrini-gamma}
  \la(\Lambda_{\gamma_1}-\Lambda_{\gamma_2})f,g\ra
  =\int_\OmegaD (\gamma_1-\gamma_2)
  \nabla u_f^{(1)}\cdot\nabla u_g^{(2)}\,dx.
\end{equation}

\subsection{A radial conductivity counterexample}

The conductivity counterexample follows from the preceding one by the Liouville transform.  In this subsection we work in dimension three and write $\mu_{\ell,N}=\mu_{\ell,N}^{(3)}$.  Let $Q_N$ be the family \eqref{eq:QN-family}, and let $w_N$ be the positive radial solution of
\[
  (-\Delta+Q_N)w_N=0\quad\text{in }B(0,1),
  \qquad w_N|_{\partial B(0,1)}=1.
\]
Then $Q_N=\Delta w_N/w_N$.  If $\gamma_N=w_N^2$, then $u$ solves
\[
  \nabla\cdot(\gamma_N\nabla u)=0
\]
if and only if $v=w_Nu$ solves
\[
  (-\Delta+Q_N)v=0.
\]
Since $w_N=1$ on $\partial B(0,1)$,
\[
  \partial_\nu(w_Nu)=\partial_\nu u+f\partial_\nu w_N
  \quad\text{on }\partial B(0,1),
\]
and therefore
\[
  \Lambda_{\gamma_N}f=\Lambda_{Q_N}f-f\partial_\nu w_N.
\]
As $w_N$ is radial, $\partial_\nu w_N$ is constant on the boundary.  Moreover $w_N$ is the Schr\"odinger solution with boundary value $1$, so $\partial_\nu w_N=\mu_{0,N}$.  Hence the conductivity DN eigenvalues are
\begin{equation}\label{eq:kappa-from-mu}
  \kappa_{\ell,N}=\mu_{\ell,N}-\mu_{0,N}.
\end{equation}
Using \eqref{eq:mu-N-formula-general},
\begin{equation}\label{eq:kappa-N-formula}
  \kappa_{\ell,N}
  =\ell+\frac{\lambda^2-4N^2}{2}
  \left(\frac1{\lambda+2\ell+1}-\frac1{\lambda+1}\right).
\end{equation}
This is affine in $N^2$.

\begin{proposition}\label{prop:conductivity-DN-counterexample}
Let $\OmegaD=B(0,1)\subset\R^3$.  Define
\begin{align*}
 w_1(r)&=\frac{27+7r^2}{17(9-7r^2)},\\[3pt]
 w(r)&=\frac{143+27r^{10}}{17(13-3r^{10})},\\[3pt]
 w_2(r)&=\frac{225+13r^{14}}{17(15-r^{14})},
\end{align*}
and set
\[
  \gamma_1=w_1^2,
  \qquad
  \gamma=w^2,
  \qquad
  \gamma_2=w_2^2.
\]
Then $\gamma,\gamma_1,\gamma_2$ are positive real analytic radial conductivities in a neighborhood of $\overline{B(0,1)}$, they are pairwise distinct, and
\begin{equation}\label{eq:conductivity-half-counterexample}
  \Lambda_\gamma=\frac12\Lambda_{\gamma_1}+\frac12\Lambda_{\gamma_2}.
\end{equation}
\end{proposition}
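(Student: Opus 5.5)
The plan is to reduce everything to the Schr\"odinger identity of Proposition~\ref{prop:radial-schrodinger-counterexample}, using the Liouville transform recorded just above. We take $\lambda=16$ and $N=1,5,7$, and we show that the displayed functions $w_1,w,w_2$ are exactly the positive radial solutions $w_N$ of $(-\Delta+Q_N)w_N=0$ with $w_N|_{\partial B}=1$, for $N=1,5,7$ respectively. The $\ell=0$ case of the proof of Lemma~\ref{lem:radial-affine-family} (with $n=3$, $\alpha=\nu=1/2$) provides this solution explicitly. With $r=e^{-x}$,
\[
  w_N(r)=C\,r^{-1/2}\,y(-\log r)=C\,\frac{1-c_Nr^{2N}}{1-d_Nr^{2N}},
  \qquad c_N=d_N\frac{1/2-N}{1/2+N}=-d_N\frac{2N-1}{2N+1},
\]
and $C$ is fixed by $w_N(1)=1$.

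The first step is to verify the three closed forms. For $N=1$ we have $d_1=7/9$ and $c_1=-7/27$, which gives $w_1\propto(27+7r^2)/(9-7r^2)$, and the normalization at $r=1$ yields the factor $1/17$. For $N=5$ we have $d_5=3/13$ and $c_5=-(9/11)(3/13)=-27/143$, so $w\propto(143+27r^{10})/(13-3r^{10})$, again with constant $1/17$ since $170/10=17$. For $N=7$ we have $d_7=1/15$ and $c_7=-13/225$, so $w_2\propto(225+13r^{14})/(15-r^{14})$, with $238/14=17$.

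The second step is to check positivity and real analyticity. Each $w$ is a ratio of polynomials in $|x|^2$ with positive numerator. Its denominator is bounded below by $2,10,14$ respectively on $\overline B$, and it stays positive on a slightly larger ball. Hence $\gamma=w^2$ is positive and real analytic near $\overline{B(0,1)}$. The conductivities are pairwise distinct because the $w$'s differ: for example, $w_1$ has a nonzero $r^2$ Taylor coefficient, while $w$ and $w_2$ are constant to order $r^{10}$ and $r^{14}$ respectively, and these two differ from each other at order $r^{10}$.

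The third step is the DN identity itself. By the Liouville computation above, $\Lambda_{\gamma_N}$ is diagonal in spherical harmonics with eigenvalues $\kappa_{\ell,N}$ given by \eqref{eq:kappa-N-formula}. These eigenvalues are affine in $N^2$ with coefficients independent of $N$. Since $25=\tfrac12(1+49)$, we obtain $\kappa_{\ell,5}=\tfrac12\kappa_{\ell,1}+\tfrac12\kappa_{\ell,7}$ for all $\ell$, and diagonalization then gives \eqref{eq:conductivity-half-counterexample}. Equivalently, we can subtract the identity $\mu_{0,5}=\tfrac12\mu_{0,1}+\tfrac12\mu_{0,7}$ from the Schr\"odinger identity, which is the constant-term case of the same relation.

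The only real obstacle is bookkeeping. One must confirm that the regular solution from the lemma, rewritten in $r$, is indeed the ratio above, and in particular that the factor $r^{-1/2}$ cancels $e^{-\nu x}=r^{1/2}$. One must also confirm that $c_N$ is computed with $\nu=1/2$ and not with general $\ell$. Beyond that, the checks of the three rational functions are routine arithmetic.
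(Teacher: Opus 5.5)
Your proposal is correct and follows the paper's proof essentially step for step: you identify the displayed $w_N$ with the normalized $\ell=0$ regular solution (your $c_N$ is the paper's $\beta_N$), check positivity, and then use the affine dependence of $\kappa_{\ell,N}$ on $N^2$ together with $25=\tfrac12(1+49)$ and diagonalization in spherical harmonics. The only cosmetic difference is that you prove pairwise distinctness via Taylor coefficients at $r=0$, whereas the paper compares the values $\gamma_1(0),\gamma(0),\gamma_2(0)$.
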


\begin{proof}
The displayed functions are the corresponding positive solutions $w_N$ for $\lambda=16$ and $N=1,5,7$.  More generally, the regular radial solution for the zero spherical harmonic in Lemma~\ref{lem:radial-affine-family}, normalized by $w_N(1)=1$, is
\[
  w_N(r)=\frac{1-d_N}{1-\beta_N}
  \frac{1-\beta_Nr^{2N}}{1-d_Nr^{2N}},
  \qquad
  \beta_N=d_N\frac{1-2N}{1+2N}.
\]
For the three values $N=1,5,7$ the denominators are nonzero near $\overline B(0,1)$, and the displayed $w_N$ are positive there.  Thus $\gamma_N=w_N^2$ are positive real analytic conductivities.  Formula \eqref{eq:kappa-N-formula} and the identity
\[
  5^2=\frac12\cdot1^2+\frac12\cdot7^2
\]
show that
\[
  \kappa_{\ell,5}=\frac12\kappa_{\ell,1}+\frac12\kappa_{\ell,7}
  \qquad\text{for every }\ell\ge0.
\]
Diagonalization in spherical harmonics gives \eqref{eq:conductivity-half-counterexample}.  The three conductivities are pairwise distinct, for instance because
\[
  \gamma_1(0)=\frac9{289},\qquad
  \gamma(0)=\frac{121}{289},\qquad
  \gamma_2(0)=\frac{225}{289}.
\]
\end{proof}

\begin{corollary}\label{cor:random-conductivity-mean-DN-deterministic}
There is a two-valued random conductivity $\Gamma$ and a deterministic conductivity $\gamma_*$ such that
\[
  \E[\Lambda_\Gamma]=\Lambda_{\gamma_*},
\]
while
\[
  \gamma_*\ne \E\Gamma.
\]
\end{corollary}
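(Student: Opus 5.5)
The corollary should follow directly from Proposition~\ref{prop:conductivity-DN-counterexample}, in the same way that Corollary~\ref{cor:random-potential-mean-DN-deterministic} follows from Proposition~\ref{prop:radial-schrodinger-counterexample}. Take $\Gamma$ to be the random conductivity equal to $\gamma_1$ or $\gamma_2$, each with probability $1/2$, on any probability space carrying a fair coin. Set $\gamma_*=\gamma=w^2$ from that proposition.

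Since $\Gamma$ takes only two values in $\mathcal G_+^\infty(B(0,1))$, measurability is automatic. For fixed boundary values the scalars $\la\Lambda_\Gamma f,g\ra$ take two finite values, so the weak expectation exists and
\[
  \la \E\Lambda_\Gamma f,g\ra=\tfrac12\la\Lambda_{\gamma_1}f,g\ra+\tfrac12\la\Lambda_{\gamma_2}f,g\ra .
\]
By \eqref{eq:conductivity-half-counterexample}, the right-hand side equals $\la\Lambda_\gamma f,g\ra$, which gives $\E\Lambda_\Gamma=\Lambda_{\gamma_*}$.

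It remains to show $\gamma_*\ne\E\Gamma=\tfrac12(\gamma_1+\gamma_2)$, which is the only step needing a check. Evaluating at the origin with the values recorded in the proof of Proposition~\ref{prop:conductivity-DN-counterexample} gives
\[
  \E\Gamma(0)=\tfrac12\Bigl(\tfrac{9}{289}+\tfrac{225}{289}\Bigr)=\tfrac{117}{289}\ne\tfrac{121}{289}=\gamma_*(0).
\]
By continuity the two functions then differ on a neighborhood of $0$, hence as elements of any reasonable function space. No real obstacle arises, since all the analytic work was already done in the proposition.
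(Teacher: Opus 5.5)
Your proposal is correct and is essentially the paper's proof. The paper also takes $\Gamma\in\{\gamma_1,\gamma_2\}$ with probability $1/2$ each and sets $\gamma_*=\gamma$, obtains $\E\Lambda_\Gamma=\Lambda_\gamma$ from \eqref{eq:conductivity-half-counterexample}, and separates $\gamma$ from $\E\Gamma$ by the values $121/289\ne 117/289$ at the origin; your remarks on measurability and existence of the weak expectation are details the paper leaves implicit.
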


\begin{proof}
Let $\Gamma=\gamma_1$ and $\Gamma=\gamma_2$ with probabilities $1/2$ and $1/2$, where $\gamma_1,\gamma_2$ are the conductivities in Proposition~\ref{prop:conductivity-DN-counterexample}, and take $\gamma_*=\gamma$.  Then \eqref{eq:conductivity-half-counterexample} gives $\E\Lambda_\Gamma=\Lambda_\gamma$.  On the other hand,
\[
  \gamma(0)=\frac{121}{289},
  \qquad
  \frac12(\gamma_1(0)+\gamma_2(0))=\frac{117}{289}.
\]
Thus $\gamma\ne\E\Gamma$.
\end{proof}

\begin{remark}
The same barycentric construction gives conductivity DN identities in every dimension $n\ge2$, after replacing $\gamma_N$ by $w_N^2$, where $w_N$ is the positive radial solution of $(-\Delta+Q_N)w_N=0$ with $w_N|_{\partial B}=1$.  The explicit rational formulas above are the three-dimensional instances used in Proposition~\ref{prop:conductivity-DN-counterexample}.
\end{remark}

\begin{remark}
The conductivities in Proposition~\ref{prop:conductivity-DN-counterexample} are not scalar multiples.  Indeed,
\[
  \gamma_1(1)=\gamma(1)=\gamma_2(1)=1,
\]
but
\[
  \gamma_1(0)=\frac9{289},
  \qquad
  \gamma(0)=\frac{121}{289},
  \qquad
  \gamma_2(0)=\frac{225}{289}.
\]
Thus the example is not a disguised instance of the trivial scaling relation $\Lambda_{\alpha\gamma}=\alpha\Lambda_\gamma$.
\end{remark}

\section{Conductivity Green's operators}
\label{sec:conductivity-Green}

We record the corresponding symbolic facts for conductivity Green's operators.  Principal-coefficient randomness is organized differently from lower-order potential randomness.  As in the Schr\"odinger Green's operator section, all symbol statements are local in the interior: if $\chi,\psi\in C_c^\infty(\OmegaD)$ and $\psi=1$ near $\supp\chi$, the symbol of $R_\gamma$ at points of $\supp\chi$ means the full symbol of the localized operator $\chi R_\gamma\psi$.

Let $\gamma$ be a strongly measurable $\mathcal G_+^\infty(\OmegaD)$-valued random conductivity, uniformly bounded above and below and uniformly bounded in $C^N(\overline\OmegaD)$ for every $N$.  Set
\[
  a(\omega,x)=\gamma(\omega,x)^{-1},
  \qquad
  m(x)=\E a(\cdot,x),
  \qquad
  \gamma_h=m^{-1}.
\]
Let
\[
  \overline R=\E R_\gamma.
\]

\begin{proposition}\label{prop:conductivity-principal-symbol}
In the above localized interior sense, $\overline R$ is a classical pseudodifferential operator of order $-2$, and
\[
  \sigma_{-2}(\overline R)(x,\xi)=m(x)|\xi|^{-2},
  \qquad
  \sigma_{-3}(\overline R)(x,\xi)=i\nabla m(x)\cdot\xi\,|\xi|^{-4}.
\]
Thus $\overline R$ determines $m=\E(\gamma^{-1})$.
\end{proposition}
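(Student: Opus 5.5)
The plan is to compute the first two homogeneous terms of the interior parametrix of $P_\gamma=-\nabla\cdot\gamma\nabla$ for a deterministic $\gamma\in\mathcal G_+^\infty(\OmegaD)$. The key observation is that both terms turn out to be \emph{linear} in $a=\gamma^{-1}$. Averaging then proceeds exactly as in Proposition~\ref{prop:averaged-G-well-defined}.

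\textbf{Step 1: the averaging framework.} Since $\gamma$ is uniformly bounded below, $P_\gamma\ge c\lambda_1$ on $H^1_0$ uniformly in $\omega$. The identity $R_\gamma-R_{\widetilde\gamma}=R_\gamma\nabla\cdot(\gamma-\widetilde\gamma)\nabla R_{\widetilde\gamma}$, read as a bounded map on $L^2$ via $H^{-1}\to H^1_0$, gives continuity of $\gamma\mapsto R_\gamma$. Hence $\overline R$ is well defined as a weak expectation.

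For fixed cutoffs $\chi,\psi$, the standard interior parametrix of $P_{\gamma(\omega)}$ yields a classical symbol for $\chi R_{\gamma(\omega)}\psi$. Its seminorms are controlled by finitely many $C^N$ norms of $\gamma(\omega)$ and by the lower bound on $\gamma$, so the homogeneous terms may be averaged one by one. This shows that $\chi\overline R\psi$ is classical of order $-2$, with $\sigma_{-j}(\overline R)=\E\,\sigma_{-j}(R_\gamma)$.

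\textbf{Step 2: the deterministic symbol computation.} Write $P_\gamma=-\gamma\Delta-\nabla\gamma\cdot\nabla$. Its full symbol is
\[
  p=p_2+p_1,\qquad p_2=\gamma|\xi|^2,\qquad p_1=-i\nabla\gamma\cdot\xi .
\]
For a left parametrix $b\sim b_{-2}+b_{-3}+\cdots$, I will solve
\[
  \sum_\alpha\frac1{\alpha!}\,\partial_\xi^\alpha b\,D_x^\alpha p\sim1
\]
order by order in the Kohn--Nirenberg convention used in Lemma~\ref{lem:Green-symbol-triangular}.

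At order $0$ this gives $b_{-2}=\gamma^{-1}|\xi|^{-2}=a|\xi|^{-2}$.

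At order $-1$ the equation is
\[
  b_{-3}p_2+b_{-2}p_1+\sum_{|\alpha|=1}\partial_\xi^\alpha b_{-2}\,D_x^\alpha p_2=0 .
\]
The second term is $-i\nabla\gamma\cdot\xi/(\gamma|\xi|^2)$. In the third term, $\partial_\xi b_{-2}=-2\xi/(\gamma|\xi|^4)$ and $D_xp_2=-i\nabla\gamma\,|\xi|^2$, so it equals $2i\nabla\gamma\cdot\xi/(\gamma|\xi|^2)$. Adding these two terms gives $i\nabla\gamma\cdot\xi/(\gamma|\xi|^2)$, and solving for $b_{-3}$ gives
\[
  b_{-3}=-\frac{i\nabla\gamma\cdot\xi}{\gamma^2|\xi|^4}
  =i\nabla(\gamma^{-1})\cdot\xi\,|\xi|^{-4}.
\]
Because $P_\gamma$ is elliptic, the left parametrix agrees modulo smoothing with the symbol of $\chi R_\gamma\psi$ in the interior. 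The boundary enters only through smoothing errors, by the localization convention.

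\textbf{Step 3: averaging.} Both $b_{-2}$ and $b_{-3}$ are linear in $a$ and in its first derivatives. The uniform $C^1$ bounds let $\nabla$ commute with $\E$, so
\[
  \sigma_{-2}(\overline R)=m|\xi|^{-2},\qquad
  \sigma_{-3}(\overline R)=i\nabla m\cdot\xi\,|\xi|^{-4}.
\]
Consequently $m=|\xi|^2\sigma_{-2}(\overline R)$ is determined by $\overline R$.

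The main obstacle is the bookkeeping in Step 2: getting the signs and factors of $i$ right in the $D_x$ convention, and checking that the left parametrix and the true localized Green's operator share the same full symbol. The other steps only repeat arguments from Proposition~\ref{prop:averaged-G-well-defined}.
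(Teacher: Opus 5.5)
Your proposal is correct and follows essentially the same route as the paper. Both compute the first two terms of the interior parametrix of $-\nabla\cdot\gamma\nabla$, note that they are linear in $a=\gamma^{-1}$ and its first derivatives, and average the uniformly controlled localized symbols term by term. The only cosmetic difference is that you solve for a left parametrix ($b\# p\sim1$, with $p_1=-i\nabla\gamma\cdot\xi$), while the paper solves $p\# r=1$ with $p_1=ia^{-2}\nabla a\cdot\xi$; your $b_{-2}=a|\xi|^{-2}$ and $b_{-3}=i\nabla a\cdot\xi\,|\xi|^{-4}$ agree with the paper's $r_{-2},r_{-3}$, as they must.
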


\begin{proof}
For a deterministic conductivity, write $a=\gamma^{-1}$.  The symbol of $-\nabla\cdot\gamma\nabla$ is
\[
  p_2=a^{-1}|\xi|^2,
  \qquad
  p_1=ia^{-2}\nabla a\cdot\xi.
\]
The parametrix recursion gives
\[
  \sigma_{-2}(R_\gamma)=a|\xi|^{-2},
  \qquad
  \sigma_{-3}(R_\gamma)=i\nabla a\cdot\xi\,|\xi|^{-4}.
\]
Averaging the uniformly controlled localized interior symbols gives the claim.  The weak expectation is justified as in the Schr\"odinger case, using the continuity of the Dirichlet resolvent under uniformly elliptic $C^1$ perturbations.
\end{proof}

Let
\[
  \theta=\frac{\xi}{|\xi|},
  \qquad
  \Pi_\theta v=v-(v\cdot\theta)\theta
\]
be the orthogonal projection onto $\theta^\perp$.

\begin{proposition}\label{prop:conductivity-first-invariant}
Let
\[
  b(\omega,x)=\nabla\log a(\omega,x),
  \qquad
  \overline b(x)=\frac{\nabla m(x)}{m(x)},
\]
and define the positive semidefinite matrix
\begin{equation}\label{eq:conductivity-covariance-matrix}
  C_a(x)=\E\left[a(\omega,x)
  (b(\omega,x)-\overline b(x))\otimes
  (b(\omega,x)-\overline b(x))\right].
\end{equation}
The order $-4$ symbol of $\overline R-R_{\gamma_h}$ determines, for every $\theta\in S^{n-1}$,
\begin{equation}\label{eq:conductivity-invariant}
  \mathcal V_\gamma(x,\theta)
  =\tr C_a(x)-\theta^TC_a(x)\theta.
\end{equation}
Equivalently,
\[
  \mathcal V_\gamma(x,\theta)
  =\E\left[\frac{|\Pi_\theta\nabla a(\cdot,x)|^2}{a(\cdot,x)}\right]
  -\frac{|\Pi_\theta\nabla m(x)|^2}{m(x)}.
\]
More precisely,
\[
  \mathcal V_\gamma(x,\theta)
  =- |\xi|^4\sigma_{-4}(\overline R-R_{\gamma_h})(x,\xi),
  \qquad \theta=\xi/|\xi|.
\]
If $n\ge2$, knowing $\mathcal V_\gamma(x,\theta)$ for all $\theta\in S^{n-1}$ determines the whole matrix $C_a(x)$.
\end{proposition}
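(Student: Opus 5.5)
We compute the full symbol of $R_\gamma$ through order $-4$ for a deterministic conductivity, written in terms of $a=\gamma^{-1}$. We then average, subtract the same expression for $\gamma_h$, and read off the quadratic term.

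\textbf{Step 1: the deterministic symbol.} The symbol of $P=-\nabla\cdot\gamma\nabla$ is
\[
p=p_2+p_1,\qquad p_2=\gamma|\xi|^2,\qquad p_1=-i\nabla\gamma\cdot\xi=i a^{-2}\nabla a\cdot\xi .
\]
There is no zeroth order term. The parametrix recursion is
\[
r_{-2}=p_2^{-1},\qquad r_{-3}=-r_{-2}\,p_1\,r_{-2}
\]
(plus the composition correction $\partial_\xi p_2\, D_x r_{-2}$, which is already absorbed into the formula quoted in Proposition~\ref{prop:conductivity-principal-symbol}), and
\[
r_{-4}=-r_{-2}\Big(p_1r_{-3}+\sum_{|\alpha|=1}\partial_\xi^\alpha p_2\,D_x^\alpha r_{-3}+\sum_{|\alpha|=1}\partial_\xi^\alpha p_1\,D_x^\alpha r_{-2}+\sum_{|\alpha|=2}\tfrac1{\alpha!}\partial_\xi^\alpha p_2\,D_x^\alpha r_{-2}\Big).
\]
Using the left-parametrix convention $r\circ p=1$ is perhaps cleaner; either convention works.

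Substituting $r_{-2}=a|\xi|^{-2}$ and $r_{-3}=i\nabla a\cdot\xi\,|\xi|^{-4}$, we expect a result of the form
\[
\sigma_{-4}(R_\gamma)=|\xi|^{-4}\Big(L(\nabla^2a)(\theta)\;-\;\frac{|\nabla a|^2-(\nabla a\cdot\theta)^2}{a}\Big).
\]
Here $L$ is a universal \emph{linear} expression in the second derivatives of $a$, such as $c_1\Delta a+c_2\,\theta^T\nabla^2a\,\theta$. The point to verify carefully is that every term at order $-4$ is either linear in $\partial^2 a$ or of the form $a^{-1}\partial a\,\partial a$. This follows from homogeneity in $a$: the operator $R_\gamma$ is homogeneous of degree $1$ in $a$, so every term of $r_{-4}$ is homogeneous of degree $1$ in $a$ and contains two $x$-derivatives. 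The only possible such terms are $\partial^2 a$ and $a^{-1}(\partial a)^2$.

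The precise quadratic part has to be computed, and this bookkeeping is the main obstacle. I would check its coefficient on the one-dimensional-like test case $a=e^{x\cdot v}$, or by conjugation: $R_\gamma=a^{1/2}(-\Delta+W)^{-1}a^{1/2}$-type factorizations give the transverse structure $-|\Pi_\theta\nabla a|^2/a$. The form claimed in the statement tells us the target: the quadratic part must be $-|\Pi_\theta\nabla a|^2/(a|\xi|^4)$, up to terms linear in $\nabla^2 a$.

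\textbf{Step 2: average and subtract.} By Proposition~\ref{prop:averaged-G-well-defined}-type averaging, justified as in Proposition~\ref{prop:conductivity-principal-symbol}, we have
\[
\sigma_{-4}(\overline R)=\E\,\sigma_{-4}(R_\gamma).
\]
The linear part averages to $L(\nabla^2 m)$, which coincides with the linear part for $R_{\gamma_h}$, since $\gamma_h^{-1}=m$. Hence
\[
-|\xi|^4\sigma_{-4}(\overline R-R_{\gamma_h})=\E\Big[\frac{|\Pi_\theta\nabla a|^2}{a}\Big]-\frac{|\Pi_\theta\nabla m|^2}{m}.
\]
To identify this with $\tr C_a-\theta^TC_a\theta=\tr(\Pi_\theta C_a)$, expand
\[
a(b-\overline b)\otimes(b-\overline b)=\frac{\nabla a\otimes\nabla a}{a}-\nabla a\otimes\overline b-\overline b\otimes\nabla a+a\,\overline b\otimes\overline b .
\]
Taking expectations and using $\E\nabla a=\nabla m$ and $\overline b=\nabla m/m$ gives
\[
C_a=\E\Big[\frac{\nabla a\otimes\nabla a}{a}\Big]-\frac{\nabla m\otimes\nabla m}{m}.
\]
Applying $\tr\Pi_\theta(\cdot)$ then yields the stated equality. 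Positive semidefiniteness of $C_a$ is clear from its definition, since $a>0$.

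\textbf{Step 3: recovering $C_a$ from $\mathcal V_\gamma$.} For $n\ge2$, let $\theta$ range over an orthonormal basis. Summing the resulting values gives
\[
\sum_{i=1}^n\mathcal V_\gamma(x,e_i)=n\tr C_a-\tr C_a=(n-1)\tr C_a,
\]
which determines $\tr C_a$. Then $\theta^TC_a\theta=\tr C_a-\mathcal V_\gamma(x,\theta)$ is known for every $\theta$, and polarization recovers the symmetric matrix $C_a$.
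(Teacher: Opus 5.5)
Your Steps 2 and 3 are correct and match the paper. Summing $\mathcal V_\gamma$ over an orthonormal basis to get $(n-1)\tr C_a$ is equivalent to the paper's spherical average. Your homogeneity remark also correctly explains why the part of $\sigma_{-4}(R_\gamma)$ that is linear in $\nabla^2 a$ cancels against $R_{\gamma_h}$.

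The gap is in Step 1. You never determine the quadratic part of $\sigma_{-4}(R_\gamma)$, and that is the whole content of the identity $\mathcal V_\gamma=-|\xi|^4\sigma_{-4}(\overline R-R_{\gamma_h})$. Homogeneity of degree one in $a$, together with rotation invariance, only shows that this part has the form $a^{-1}\bigl(c_3|\nabla a|^2+c_4(\nabla a\cdot\theta)^2\bigr)|\xi|^{-4}$, with unknown universal constants $c_3,c_4$. Fixing $c_3=-1$, $c_4=1$ because ``the form claimed in the statement tells us the target'' is circular. Your proposed check cannot supply these constants either. For $a=e^{x\cdot v}$ one has $\partial_i\partial_j a=a^{-1}\partial_i a\,\partial_j a$, so that test only sees $c_1+c_3$ and $c_2+c_4$. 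It cannot separate the quadratic coefficients from your linear ones $c_1,c_2$, and the linear ones are exactly what cancels.

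The missing step is to substitute $r_{-2}=a|\xi|^{-2}$ and $r_{-3}=i\nabla a\cdot\xi\,|\xi|^{-4}$ into the recursion you already wrote down. The four terms are
\begin{align*}
p_1r_{-3}&=-\frac{(\nabla a\cdot\xi)^2}{a^2|\xi|^4}, &
\sum_j\partial_{\xi_j}p_2\,D_{x_j}r_{-3}&=\frac{2\nabla^2a(\xi,\xi)}{a|\xi|^4},\\
\sum_j\partial_{\xi_j}p_1\,D_{x_j}r_{-2}&=\frac{|\nabla a|^2}{a^2|\xi|^2}, &
\frac12\sum_{i,j}\partial_{\xi_i}\partial_{\xi_j}p_2\,D_{x_i}D_{x_j}r_{-2}&=-\frac{\Delta a}{a|\xi|^2}.
\end{align*}
Multiplying their sum by $-r_{-2}=-a|\xi|^{-2}$ gives
\[
\sigma_{-4}(R_\gamma)=\frac{\Delta a}{|\xi|^4}-\frac{2\nabla^2a(\xi,\xi)}{|\xi|^6}+\frac{(\nabla a\cdot\xi)^2}{a|\xi|^6}-\frac{|\nabla a|^2}{a|\xi|^4}.
\]
So $c_1=1$, $c_2=-2$, $c_3=-1$, $c_4=1$, and the quadratic part is $-|\Pi_\theta\nabla a|^2/(a|\xi|^4)$. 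This is exactly the paper's computation.

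Your conjugation idea also works if you carry it out. One has $R_\gamma=a^{1/2}(-\Delta_D+W)^{-1}a^{1/2}$ with $W=a^{1/2}\Delta(a^{-1/2})$, and composing symbols through order $-4$ reproduces the same formula. Without one of these two computations, the central identity of the proposition remains unproved.
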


\begin{proof}
For the localized interior symbol we use the Kohn--Nirenberg convention
$D_x=(1/i)\partial_x$ and write the parametrix symbol as
$r\sim r_{-2}+r_{-3}+r_{-4}+\cdots$.  For a deterministic
$a=\gamma^{-1}$ the symbol of $-\nabla\cdot\gamma\nabla$ is
\[
  p=p_2+p_1,
  \qquad
  p_2=a^{-1}|\xi|^2,
  \qquad
  p_1=ia^{-2}\nabla a\cdot\xi .
\]
The equation $p\# r=1$ gives
\[
  \sum_\alpha\frac1{\alpha!}\partial_\xi^\alpha p\,D_x^\alpha r=1.
\]
At the first two orders one obtains
\[
  r_{-2}=a|\xi|^{-2},
  \qquad
  r_{-3}=i\nabla a\cdot\xi\,|\xi|^{-4}.
\]
At the next order the terms of total degree $-2$ in $p\# r$ give
\begin{equation}\label{eq:conductivity-rminus4-recursion}
\begin{split}
0={}&p_2r_{-4}+p_1r_{-3}
 +\sum_j(\partial_{\xi_j}p_2)D_{x_j}r_{-3}
 +\sum_j(\partial_{\xi_j}p_1)D_{x_j}r_{-2}  \\
&+\frac12\sum_{i,j}(\partial_{\xi_i}\partial_{\xi_j}p_2)
  D_{x_i}D_{x_j}r_{-2}.
\end{split}
\end{equation}
Substituting $r_{-2}$ and $r_{-3}$ into \eqref{eq:conductivity-rminus4-recursion} yields
\[
\begin{split}
 p_1r_{-3}&=-\frac{(\nabla a\cdot\xi)^2}{a^2|\xi|^4},\qquad
 \sum_j(\partial_{\xi_j}p_2)D_{x_j}r_{-3}
 =\frac{2\nabla^2a(\xi,\xi)}{a|\xi|^4},\\
 \sum_j(\partial_{\xi_j}p_1)D_{x_j}r_{-2}
 &=\frac{|\nabla a|^2}{a^2|\xi|^2},\qquad
 \frac12\sum_{i,j}(\partial_{\xi_i}\partial_{\xi_j}p_2)
  D_{x_i}D_{x_j}r_{-2}
 =-\frac{\Delta a}{a|\xi|^2}.
\end{split}
\]
Since $p_2^{-1}=a|\xi|^{-2}$, this gives
\begin{align*}
 \sigma_{-4}(R_\gamma)(x,\xi)=r_{-4}
 &=\frac{\Delta a}{|\xi|^4}
 -\frac{2\nabla^2a(\xi,\xi)}{|\xi|^6}
 +\frac{(\nabla a\cdot\xi)^2}{a|\xi|^6}
 -\frac{|\nabla a|^2}{a|\xi|^4}.
\end{align*}
The first two terms in $\sigma_{-4}(R_\gamma)$ are linear in $a$ and cancel when the averaged symbol is compared with the symbol obtained by replacing $a$ with $m=\E a$.  The remaining terms give
\[
\begin{split}
- |\xi|^4\sigma_{-4}(\overline R-R_{\gamma_h})(x,\xi)
&=\E\left[\frac{|\nabla a|^2}{a}
     -\frac{(\nabla a\cdot\theta)^2}{a}\right] \\
&\quad -\left(\frac{|\nabla m|^2}{m}
     -\frac{(\nabla m\cdot\theta)^2}{m}\right),
\end{split}
\]
which is the displayed expression involving $\Pi_\theta$.  Since $\nabla a=ab$ and $\nabla m=m\overline b$, this expression is also
\[
  \tr C_a(x)-\theta^TC_a(x)\theta.
\]
The matrix $C_a(x)$ is positive semidefinite by definition.  Finally, if $n\ge2$, then averaging \eqref{eq:conductivity-invariant} over $S^{n-1}$ determines $\tr C_a(x)$, since
\[
  \int_{S^{n-1}}\theta^TC_a(x)\theta\,d\theta
  =\frac1n\tr C_a(x)
\]
for normalized surface measure.  Hence $\theta^TC_a(x)\theta=\tr C_a(x)-\mathcal V_\gamma(x,\theta)$ is known for all $\theta$, and this determines $C_a(x)$.
\end{proof}

\begin{corollary}\label{cor:conductivity-Green-rigidity}
Assume $n\ge2$ and $\OmegaD$ is connected.  Let $\gamma,\gamma_1,\gamma_2\in\mathcal G_+^\infty(\OmegaD)$ and let $0<\tau<1$.  If
\[
  R_\gamma=\tau R_{\gamma_1}+(1-\tau)R_{\gamma_2},
\]
then there is a constant $C>0$ such that
\[
  \gamma_2=C\gamma_1
\]
and
\[
  \gamma^{-1}=\tau\gamma_1^{-1}+(1-\tau)\gamma_2^{-1}.
\]
\end{corollary}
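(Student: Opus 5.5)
We compare interior symbols of the two sides, using Propositions~\ref{prop:conductivity-principal-symbol} and \ref{prop:conductivity-first-invariant} applied to the two-atom random conductivity $\Gamma$, which equals $\gamma_1$ with probability $\tau$ and $\gamma_2$ with probability $1-\tau$. With this choice, $\overline R=\E R_\Gamma=\tau R_{\gamma_1}+(1-\tau)R_{\gamma_2}=R_\gamma$. The uniform bounds required in Appendix~\ref{sec:conductivity-Green} hold automatically, since there are only two smooth positive values.

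\textbf{Step 1.} Comparing the order $-2$ symbols via Proposition~\ref{prop:conductivity-principal-symbol}, with $a_j=\gamma_j^{-1}$, gives
\[
  \gamma^{-1}=m=\tau a_1+(1-\tau)a_2 .
\]
This is the second claimed identity. It also shows $\gamma_h=\gamma$, so $R_{\gamma_h}=R_\gamma=\overline R$.

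\textbf{Step 2.} Since $\overline R-R_{\gamma_h}=0$, its order $-4$ symbol vanishes. Proposition~\ref{prop:conductivity-first-invariant} then gives $\mathcal V_\Gamma(x,\theta)=0$ for all $\theta$, and because $n\ge2$ this forces $C_a(x)=0$. Here
\[
  C_a=\tau a_1(b_1-\overline b)\otimes(b_1-\overline b)+(1-\tau)a_2(b_2-\overline b)\otimes(b_2-\overline b).
\]
Both weights $\tau a_1$ and $(1-\tau)a_2$ are strictly positive and each summand is positive semidefinite. Vanishing of the sum therefore gives $b_1=\overline b=b_2$ pointwise, that is,
\[
  \nabla\log a_1=\nabla\log a_2 .
\]

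\textbf{Step 3.} From Step 2, $\nabla\log(\gamma_2/\gamma_1)=0$ in $\Omega$. Since $\Omega$ is connected, $\gamma_2/\gamma_1$ is a positive constant $C$. This gives $\gamma_2=C\gamma_1$, and Step 1 supplies the formula for $\gamma^{-1}$.

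\textbf{Main obstacle.} The delicate point is the rigorous use of Proposition~\ref{prop:conductivity-first-invariant}: one must confirm that the linear terms in $\sigma_{-4}$ cancel exactly against those of $R_{\gamma_h}$, and that the identity $C_a=0$ follows from $\mathcal V\equiv 0$. The cancellation is already part of that proposition, so in practice the work is only to check the case $\overline R=R_{\gamma_h}$. The passage from $\mathcal V\equiv0$ to $C_a=0$ is the trace-averaging argument at the end of that proposition: averaging over $\theta$ gives $(1-1/n)\tr C_a=0$, which is nontrivial only because $n\ge2$. Then $\tr C_a=0$ together with $C_a\ge0$ yields $C_a=0$.
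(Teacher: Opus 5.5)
Your proof is correct and follows the same route as the paper. You compare principal symbols to get $\gamma^{-1}=\tau a_1+(1-\tau)a_2$, then apply Proposition~\ref{prop:conductivity-first-invariant} to the two-point law to force $\mathcal V\equiv0$, and finish with connectedness; the only cosmetic difference is at the point where $\mathcal V\equiv0$ is converted into $\nabla\log(a_1/a_2)=0$: the paper writes $C_a=\frac{\tau(1-\tau)a_1a_2}{m}\nabla\log\frac{a_1}{a_2}\otimes\nabla\log\frac{a_1}{a_2}$ and uses $\Pi_\theta\nabla\log(a_1/a_2)=0$ for all $\theta$, whereas you obtain $C_a=0$ by trace averaging and then split $C_a$ into two positive semidefinite summands with positive weights.
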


\begin{proof}
Let $a_j=\gamma_j^{-1}$.  Comparing principal symbols gives
\[
  \gamma^{-1}=\tau a_1+(1-\tau)a_2=:m.
\]
Hence the left side has the same first two symbol terms as $R_{m^{-1}}$.  Comparing the order $-4$ symbols and using Proposition~\ref{prop:conductivity-first-invariant} for the two-point random variable taking the values $a_1$ and $a_2$ gives $\mathcal V_\gamma(x,\theta)=0$ for all $\theta$.  In this two-point case
\[
  C_a(x)=\frac{\tau(1-\tau)a_1(x)a_2(x)}{m(x)}
  \nabla\log\frac{a_1}{a_2}(x)
  \otimes
  \nabla\log\frac{a_1}{a_2}(x).
\]
Since the coefficient in front is positive, $\mathcal V_\gamma(x,\theta)=0$ for all $\theta$ implies
\[
  \Pi_\theta\nabla\log(a_1/a_2)=0
\]
for every $\theta\in S^{n-1}$.  Since $n\ge2$, this forces $\nabla\log(a_1/a_2)=0$.  Connectedness gives $a_1/a_2$ constant, equivalently $\gamma_2=C\gamma_1$.
\end{proof}

\begin{remark}
The exact analogue of the moment recovery theorem for $G_q$ is false for $R_\gamma$.  If $\gamma(\omega,x)=\alpha(\omega)\gamma_0(x)$, then
\[
  R_{\gamma(\omega)}=\alpha(\omega)^{-1}R_{\gamma_0},
  \qquad
  \E R_\gamma=\E[\alpha^{-1}]R_{\gamma_0}
  =R_{\gamma_0/\E[\alpha^{-1}]}.
\]
Thus random scalar multiples are indistinguishable from a deterministic harmonic-mean conductivity by the averaged Green's operator.
\end{remark}

\section*{Acknowledgements}
The author was supported by the National Science and Technology Council (NSTC) grant number 113-2115-M-A49-018-MY3.

\bibliographystyle{plain}
\bibliography{stochastic_inverse_coefficients_namechecked_v2}

\end{document}